  \newcommand{\R}{\mathbb{R}}
\newcommand{\C}{\mathbb{C}}
\newtheorem{thm}{Theorem}
\newtheorem{rmk}{Remark}
\newtheorem{proposition}{Proposition}
\newtheorem{lem}{Lemma}
\title{Cubic Perturbations of Symmetric elliptic Hamiltonians of degree four in a Complex domain}
\author{
Bassem Ben Hamed\\
Ecole Nationale d'Electronique et des T\'el\'ecommunications de Sfax \\Route de Tunis km 10, BP 1163, 3021 Sfax, Tunisie.\\
E-mail: \texttt{bassem.benhamed@gmail.com}\\
\\
Ameni Gargouri\\
Facult\'e des Sciences de Sfax, D\'epartement de Math\'ematiques\\
BP 1171, 3000 Sfax, Tunisie.\\
E-mail: \texttt{ameni.gargouri@gmail.com}\\
\\
Lubomir Gavrilov\\
Institut de Math\'{e}matiques de Toulouse, UMR 5219\\
Universit\'{e}  de Toulouse,  31062 Toulouse,  France.\\
E-mail: \texttt{lubomir.gavrilov@math.univ-toulouse.fr}}
\date{}
\begin{document}
\maketitle
\date
\begin{abstract}
We consider arbitrary one-parameter cubic deformations of the Duffing oscillator $x''=x-x^3$. In the case when the first Melnikov function $M_1$ vanishes, but $M_2\neq 0$  we compute the general form of $M_2$ and   study its zeros in a suitable complex domain.

\end{abstract}

\section{Introduction}
\label{section0}

\noindent

Consider the   perturbed Duffing oscillator
\vskip0.2cm
\begin{eqnarray}\label{f}
X_{\epsilon}:
\left\{\begin{array}{ccl} \dot{x}&=&H_{y} +\epsilon f(x,y,\epsilon) \\
\dot{y}&=&-H_{x}+\epsilon g(x,y,\epsilon)
\end{array}\right.
\end{eqnarray}
Where $f(x,y,\epsilon)$, $g(x,y,\epsilon)$ are arbitrary cubic polynomials:
\begin{eqnarray*}
f(x,y,\epsilon)=\lambda_{0}+\lambda_{1}x+\lambda_{2}y+\lambda_{3}xy+\lambda_{4}x^{2}+\lambda_{5}y^{2}+\lambda_{6}x^{2}y+\lambda_{7}xy^{2}+\lambda_{8}x^{3}+\lambda_{9}y^{3}\\
g(x,y,\epsilon)=\gamma_{0}+\gamma_{1}x+\gamma_{2}y+\gamma_{3}xy+\gamma_{4}x^{2}+\gamma_{5}y^{2}+\gamma_{6}x^{2}y+\gamma_{7}xy^{2}+\gamma_{8}x^{3}+\gamma_{9}y^{3} \end{eqnarray*}
\noindent  and the parameters $\lambda_{i}=\sum_{j\geq0}\lambda_{i,j}\epsilon^{j}$, $\lambda_{i,j}\in\R$ and $\gamma_{i}=\sum_{j\geq0}\gamma_{i,j}\epsilon^{j}$, $\gamma_{i,j}\in\R$ are analytic functions of the small parameter $\epsilon$.
\noindent For $\epsilon= 0$   the system
is integrable, with a first integral
$$
H= \frac{y^2}{2} - \frac{x^2}{2} + \frac{x^4}{4}
$$
 and its phase portrait is shown on fig.\ref{fig1}.  The exterior period annulus and the two interior period annuli on fig.\ref{fig1}  give rise to three displacement maps of $X_\varepsilon$ with power series expansions of the form
 $$
 d(h,\epsilon)=\epsilon^k M_{k}(h)+\epsilon^{k+1} M_{k+1}(h)+....
 $$
 (where as usual  $h$ is the restriction of $H$ on a suitable cross-section to the period annulus). The number of the limit cycles bifurcating from each period annulus is bounded by the number of the zeros of the first non-vanishing Melnikov function $M_k$.  According to the Poincar\'e-Pontryagin formula
 $$ M_{1}(h)=\int_{H=h}\omega_{0}dx=\int_{H=h}g(x,y,0)dx-f(x,y,0)dy$$
 is a complete elliptic integral. Its zeros correspond to limit cycles bifurcating from the corresponding period annulus.  It is well known, that in our case the first non-vanishing Melnikov function $M_k$ is a complete elliptic integral, see \cite[Corollary 1]{Gav05}, and \cite{gavr99,gavr05}, and its  general form has been established  in formula (23) and Theorem 3 of \cite{Gav05}, as a linear combination of complete elliptic integrals.

 Our first result is an explicit formula for the second Melnikov function $M_2$, under the hypothesis that $M_1$ is identically zero, see Proposition \ref{m2int} and Proposition \ref{m2ext}. The main tool is the Iliev formula for $M_2$ \cite{Iliev98}. This formula was proved using the method of \cite{Francoise}. Our second result is an estimate for the number of the zeros of $M_2$, Lemma \ref{g1}, \ref{g3},  \ref{z1}, \ref{z3}. From this we deduce the maximal cyclicity of the period annuli, when at least $M_1$ or $M_2$ does nor vanish identically.

\begin{figure}
\begin{center}
 \def\svgwidth{9cm}
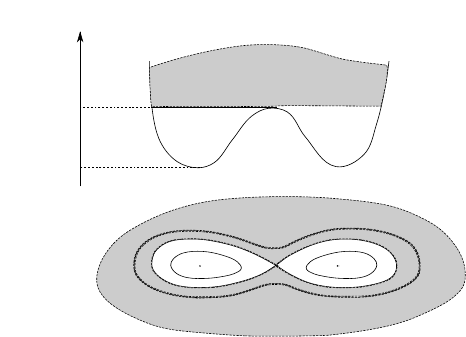
\end{center}
\caption{Phase portrait of $X_0$ and the graph of  $- \frac{x^2}{2} + \frac{x^4}{4}$}
\label{fig1}
\end{figure}

The paper is organized as follows. In section \ref{sectioncm} we compute the Melnikov functions $M_{1}(h)$ and $M_{2}(h)$ (when $M_1(h)\equiv 0$) in the interior and exterior eight-loop case see respectively. In section \ref{section1} we recall some known  Picard-Fuchs equations, which will be used later. Finally, in section  \ref{zeros} we describe the monodromy of the Abelian integrals, based on the classical Picard-Lefschetz theory, and then apply the so called Petrov trick, to obtain estimates
to the number of their  zeros in a suitable complex domain.

\section{Computation of Melnikov Functions}
\label{sectioncm}
\noindent
\vskip0.2cm
\noindent
Let $\{\gamma(h)\}_h$ be the continuous family of ovals of the non-perturbed system, where  $$\gamma(h)~\subset\{H=h\}$$ \noindent with  $h\in\Sigma=(h_{c}, h_{s})$ in the interior eight-loop case and $h\in\Sigma=(h_{s}, +\infty)$ in the exterior eight-loop case,
where $h_s=0$, $h_c=-1/4$ are the critical values of $H$.\\ \noindent
Consider the complete elliptic integrals
\begin{equation}\label{3}
\;\text{$I_{i}(h)=$}\;
\left\{
  \begin{aligned}
I_{\widetilde{\omega}_{i}}&=&\oint_{\gamma(h)} x^{i}y dx \;\text{ if }\; \Sigma=(h_{c}, h_{s})\\
I_{\widetilde{\widetilde{\omega_{i}}}}&=&\oint_{\gamma(h)} x^{i}y dx \;\text{ if }\; \Sigma=(h_{s}, +\infty)
      \end{aligned}
    \right.
\end{equation}

The Abelian integrals $I_k$, $k\geq 0$, can be expressed as linear combinations of $I_0, I_1,I_2$, with coefficients in the field $\R(h)$. \noindent In the exterior eight-loop case the symmetry $(x,y)\rightarrow (\pm x,y)$ transforms the oval $\gamma(h)$ to  $-\gamma(h)$ which implies that $I_{k}(h)\equiv0$ for odd $k$.

As well known, if we parameterize the displacement map by the Hamiltonian level h, then the following power series  expansion holds \\
\begin{eqnarray}\label{777}
d(h,\epsilon)=P(h,\epsilon)-h=\epsilon M_{1}(h)+\epsilon^{2} M_{2}(h)+...., h\in\Sigma
\end{eqnarray}
\noindent Where $P(h,\epsilon)$ is the first return map, $\Sigma$ is an open interval depending on the case under consideration.
Our first goal will be to calculate explicitly the first Melnikov function $M_{1}$ and then $M_{2}$ in (\ref{777}). We use the Iliev formula \cite{Iliev98} .\\

\noindent We denote:
\begin{eqnarray}\label{11}
f(x,y,0)=\lambda_{0,1}+\lambda_{1,1}x+\lambda_{2,1}y+\lambda_{3,1}xy+\lambda_{4,1}x^{2}+\lambda_{5,1}y^{2}+\lambda_{6,1}x^{2}y+\lambda_{7,1}xy^{2}+\lambda_{8,1}x^{3}+\lambda_{9,1}y^{3}\\
g(x,y,0)=\gamma_{0,1}+\gamma_{1,1}x+\gamma_{2,1}y+\gamma_{3,1}xy+\gamma_{4,1}x^{2}+\gamma_{5,1}y^{2}+\gamma_{6,1}x^{2}y+\gamma_{7,1}xy^{2}+\gamma_{8,1}x^{3}+\gamma_{9,1}y^{3} \end{eqnarray}

\begin{eqnarray}\label{22}
f_{\epsilon}(x,y,0)=\lambda_{0,2}+\lambda_{1,2}x+\lambda_{2,2}y+\lambda_{3,2}xy+\lambda_{4,2}x^{2}+\lambda_{5,2}y^{2}+\lambda_{6,2}x^{2}y+\lambda_{7,2}xy^{2}+\lambda_{8,2}x^{3}+\lambda_{9,2}y^{3}\\
g_{\epsilon}(x,y,0)=\gamma_{0,2}+\gamma_{1,2}x+\gamma_{2,2}y+\gamma_{3,2}xy+\gamma_{4,2}x^{2}+\gamma_{5,2}y^{2}+\gamma_{6,2}x^{2}y+\gamma_{7,2}xy^{2}+\gamma_{8,2}x^{3}+\gamma_{9,2}y^{3}
\end{eqnarray}

We recall, that non-perturbed Hamiltonian system has two bounded (interior) period annuli and one unbounded (exterior) period annulus.
\subsection{Computation of $M_1$}
\subsubsection{The interior Duffing oscillator}
\begin{proposition}
The first Melnikov functions $M_{1}$ for the perturbed interior Duffing oscillator have the form
\begin{eqnarray}\label{first}
M_{1}(h)&= &\alpha_{0}(h)I_{0}+\alpha_{1} I_{1}+\alpha_{2} I_{2} \end{eqnarray}
where
$$ \alpha_{0}(h)=c_{0}+c_{1}h,   \alpha_{1}=2\lambda_{4,1}+\gamma_{3,1} ,    \alpha_{2}=c_{2},$$
and
$$c_{0}=\lambda_{1,1}+\gamma_{2,1},  c_{1}=\frac{4}{7}(\lambda_{7,1}+3\gamma_{9,1}),  c_{2}=\gamma_{6,1}+3\lambda_{8,1}+\frac{1}{7}\lambda_{7,1}+\frac{3}{7}\gamma_{9,1}.$$
\end{proposition}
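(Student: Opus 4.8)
The plan is to start from the Poincaré–Pontryagin formula for $M_1$ given in the introduction,
$$
M_{1}(h)=\oint_{\gamma(h)} g(x,y,0)\,dx - f(x,y,0)\,dy,
$$
substitute the explicit cubic expressions (\ref{11})--(\ref{6}) for $f(x,y,0)$ and $g(x,y,0)$, and reduce everything to the basic monomial integrals $\oint_{\gamma(h)} x^i y^j\,dx$. The first step is therefore bookkeeping: expand the one-form $\omega_0 = g\,dx - f\,dy$ term by term and collect the coefficients $\lambda_{i,1}, \gamma_{i,1}$.

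The key simplifying mechanism is that $\gamma(h)$ is a closed oval, so exact one-forms integrate to zero, and we may integrate by parts freely. First I would dispose of all monomials $x^a y^b\,dx$ and $x^a y^b\,dy$ with $b$ even: on $\gamma(h)$ the substitution $dy$ can be converted using $d(x^a y^{b+1})=0$ around the loop, and the relation $y\,dy = (x-x^3)\,dx$ coming from $dH=0$ (since $H_x=-x+x^3$, $H_y=y$) lets me trade $y\,dy$ for a polynomial in $x$ times $dx$. Every term with an even power of $y$ in the integrand, as well as every pure $dy$ contribution, reduces to $\oint x^k\,dx=0$ or to an integral of the form $\oint x^k y\,dx = I_k(h)$ via these exactness relations; in particular the constants $\lambda_{0,1},\gamma_{0,1}$, the terms $\lambda_{1,1}x,\gamma_{1,1}x,\dots$ and the pure-power terms contribute only through $I_0,I_1,I_2$ after reduction. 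This is exactly what lets the final answer collapse onto the three integrals $I_0,I_1,I_2$, as the excerpt already asserts is possible for any $I_k$.

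Next I would invoke the stated reduction: every $I_k$ with $k\ge 3$ is an $\R(h)$-linear combination of $I_0,I_1,I_2$. Concretely, the Picard–Fuchs / integration-by-parts relations for $H=\frac{y^2}{2}-\frac{x^2}{2}+\frac{x^4}{4}$ express $I_3,I_4$ in terms of $I_0,I_1,I_2$ with coefficients that are polynomial in $h$ (this is where the linear-in-$h$ factor $\alpha_0(h)=c_0+c_1 h$ originates, and why $\alpha_1,\alpha_2$ come out as pure constants). After substituting these reductions and regrouping, I would read off the coefficients of $I_0$, $I_1$, $I_2$ and match them against the claimed values of $c_0,c_1,c_2,\alpha_1,\alpha_2$.

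The main obstacle is purely computational rather than conceptual: keeping track of the exact rational coefficients through the integration-by-parts reductions, especially the factors $\tfrac47$ and $\tfrac17$ in $c_1$ and $c_2$, which arise from the specific Picard–Fuchs relations expressing the higher integrals $I_3$ (and the $y^3$-type contributions) in terms of $I_0,I_1,I_2$. I expect the cubic terms $\lambda_{7,1}xy^2$, $\lambda_{8,1}x^3$, $\gamma_{6,1}x^2y$, $\gamma_{9,1}y^3$ to be the delicate ones, since they produce the weight-three integrals $I_3$ and $\oint y^3\,dx$ that must be reduced; the quadratic and lower terms either vanish by the oval symmetry and exactness or feed directly into $I_0,I_1$. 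Care is needed with the $y^3$ and $xy^2$ terms, where $y^2=2H+x^2-\tfrac{x^4}{2}$ on $\gamma(h)$ turns even powers of $y$ into polynomials in $x$ and a factor of $h$, producing precisely the $h$-dependence collected in $\alpha_0(h)$.
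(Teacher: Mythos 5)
Your proposal follows essentially the same route as the paper's proof: expand the Poincar\'e--Pontryagin integrand $g\,dx-f\,dy$, use exactness and integration by parts on the closed oval to kill the pure-$x$ and even-$y$-power terms (via $y^{2}=2h+x^{2}-\tfrac{x^{4}}{2}$ on $\gamma(h)$), and reduce the remaining $\oint y^{3}dx$ contribution through $I_{4}=\tfrac{4h}{7}I_{0}+\tfrac{8}{7}I_{2}$, which is exactly where the coefficients $\tfrac47$ and $\tfrac17$ arise in the paper as well. The only cosmetic difference is your use of the relation $y\,dy=(x-x^{3})\,dx$ from $dH=0$ where the paper writes out explicit integration-by-parts identities such as $x^{a}y^{b}\,dy=d(\cdot)-(\cdots)\,dx$; the strategy and the delicate terms you flag coincide with the paper's computation.
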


\begin{proof}
\noindent It is well known that :\\
$$M_{1}(h)=\int_{H=h}\omega_{0}dx=\int_{H=h}g(x,y,0)dx-f(x,y,0)dy$$.\\
\noindent where
\begin{eqnarray*}
 \int_{H=h}g(x,y,0)dx=\int_{H=h}[y(\gamma_{2,1}+\gamma_{3,1}x+\gamma_{6,1}x^{2})+y^{2}(\gamma_{5,1}+\gamma_{7,1}x)+\gamma_{9,1}y^{3}]dx
 \end{eqnarray*}
 \begin{eqnarray*}
-\int_{H=h}f(x,y,0)dy=-\int_{H=h}[\lambda_{1,1}x+\lambda_{3,1}xy+\lambda_{4,1}x^{2}+\lambda_{6,1}x^{2}y+
\lambda_{7,1}xy^{2}+\lambda_{8,1}x^{3}]dy
\end{eqnarray*}
\noindent Or\\ \\
\noindent $xdy=d(xy)-ydx$, \ \ $xydy=d(x\frac{y^{2} }{2})-\frac{y^{2}}{2}dx$, \ \ $x^{2}dy=d(x^{2}y)-2xydx$ \\ \\
\noindent $x^{2}ydy=d(x^{2}\frac{y^{2}}{2})-xy^{2}dx$, \ \ $xy^{2}dy=d(x\frac{y^{3}}{3})-\frac{y^{3}}{3}dx$, \ \ $x^{3}dy=d(x^{3}y)-3x^{2}ydx$\\

\noindent Therefore we can rewrite $\int_{H=h}\omega$ in the form $\int_{H=h}\omega=dQ(x,y,0)+yq(x,y,0)dx$ with $$Q(x,y,0)=\gamma_{0,1}x+\frac{\gamma_{1,1}}{2}x^{2}+\frac{\gamma_{4,1}}{3}x^{3}+\frac{\gamma_{8,1}}{4}x^{4}$$ and \\ $$yq(x,y,0)=[(\lambda_{1,1}+\gamma_{2,1})+(\gamma_{3,1}+2\lambda_{4,1})x+(\gamma_{6,1}+3\lambda_{8,1})x^{2}]y$$\noindent $$+[(\gamma_{5,1}+\frac{\lambda_{3,1}}{2})+(\lambda_{6,1}+\gamma_{7,1})x]y^{2}+(\gamma_{9,1}+\frac{\lambda_{7,1}}{3})y^{3}.$$\\ \noindent Then\\
\begin{eqnarray*}
M_{1}(h)=(\gamma_{2,1}+\lambda_{1,1})I_{0}+(\gamma_{3,1}+2\lambda_{4,1})I_{1}+(\gamma_{6,1}+3\lambda_{8,1})I_{2}+(\frac{\lambda_{7,1}}{3}+\gamma_{9,1})\int_{H=h}y^{3}dx.
\end{eqnarray*}
\noindent and $$\int_{H=h}y^{3}dx=\int_{H=h}y(2h+x^{2}-\frac{x^{4}}{2})=2hI_{0}+I_{2}-\frac{I_{4}}{2}=\frac{12h}{7}I_{0}+ \frac{3}{7}I_{2}$$
\noindent   implies (\ref{first})\end{proof}
\vskip0.2cm
\subsubsection{The exterior Duffing oscillator}

\begin{proposition}
The first Melnikov functions $ M_{1}$ for the perturbed exterior Duffing oscillator have the form
\begin{eqnarray}\label{firstext}
M_{1}(h)&= &\alpha_{0}(h)I_{0}+\alpha_{2} I_{2} \end{eqnarray}
 where
 $$ \alpha_{0}(h)=c_{0}+c_{1}h, \alpha_{2}=c_{2}$$
$$c_{0}=\lambda_{1,1}+\gamma_{2,1}, c_{1}=\frac{4}{7}(\lambda_{7,1}+3\gamma_{9,1}), c_{2}=\gamma_{6,1}+3\lambda_{8,1}+\frac{1}{7}\lambda_{7,1}+\frac{3}{7}\gamma_{9,1} .$$
\end{proposition}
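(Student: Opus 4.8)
The plan is to follow the interior computation step for step and to bring in the symmetry only at the very end, where it removes a single term.

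First I would start from the Poincar\'e--Pontryagin formula $M_{1}(h)=\int_{H=h}g(x,y,0)\,dx-f(x,y,0)\,dy$ and apply exactly the same integration-by-parts identities ($x\,dy=d(xy)-y\,dx$, $x^{2}\,dy=d(x^{2}y)-2xy\,dx$, and so on) that were used for the interior oscillator. These are purely formal manipulations of $1$-forms and are therefore insensitive to which period annulus is under consideration, so the integrand is again rewritten as $dQ(x,y,0)+yq(x,y,0)\,dx$ with the same $Q$ and the same quadratic factor $q$. Since every oval $\gamma(h)$ in the exterior annulus is still a compact closed curve, $\oint_{\gamma(h)}dQ=0$, and I obtain the identical intermediate expression
$$M_{1}(h)=(\gamma_{2,1}+\lambda_{1,1})I_{0}+(\gamma_{3,1}+2\lambda_{4,1})I_{1}+(\gamma_{6,1}+3\lambda_{8,1})I_{2}+\Big(\tfrac{\lambda_{7,1}}{3}+\gamma_{9,1}\Big)\oint_{\gamma(h)}y^{3}\,dx.$$

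Next I would invoke the symmetry recorded after (\ref{3}): in the exterior case the involution $(x,y)\mapsto(-x,y)$ sends $\gamma(h)$ to $-\gamma(h)$, whence $I_{k}(h)\equiv 0$ for every odd $k$. In particular $I_{1}\equiv 0$, so the middle term above disappears. This is the single structural difference from the interior proposition, and it is precisely what turns the three-term answer (\ref{first}) into the two-term answer (\ref{firstext}).

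Finally I would substitute the Picard--Fuchs reduction $\oint_{\gamma(h)}y^{3}\,dx=\tfrac{12h}{7}I_{0}+\tfrac{3}{7}I_{2}$. This relation uses only $y^{2}=2h+x^{2}-\tfrac{x^{4}}{2}$ on $\{H=h\}$ together with the reduction of $I_{4}$ to $I_{0},I_{2}$, both of which are annulus-independent (and the odd integral that would otherwise intervene vanishes here anyway by the same symmetry). Collecting the coefficients of $I_{0}$ and $I_{2}$ then yields $\alpha_{0}(h)=c_{0}+c_{1}h$ and $\alpha_{2}=c_{2}$ with the stated constants, using $\big(\tfrac{\lambda_{7,1}}{3}+\gamma_{9,1}\big)\tfrac{12h}{7}=\tfrac{4h}{7}(\lambda_{7,1}+3\gamma_{9,1})$ and $\big(\tfrac{\lambda_{7,1}}{3}+\gamma_{9,1}\big)\tfrac{3}{7}=\tfrac{1}{7}\lambda_{7,1}+\tfrac{3}{7}\gamma_{9,1}$.

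The computation is essentially routine, so the ``hard part'' is one of justification rather than of genuine difficulty: one must confirm that the formal integration-by-parts and the vanishing $\oint_{\gamma(h)}dQ=0$ remain legitimate on the unbounded exterior annulus (they do, each level curve being a compact oval), and that the $I_{4}\to\{I_{0},I_{2}\}$ reduction is equally valid there. The only content-bearing step is the symmetry argument eliminating $I_{1}$; everything else is bookkeeping identical to the interior case.
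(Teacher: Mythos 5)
Your proof is correct and is exactly the argument the paper intends: its own proof of this proposition is the one-line remark that the computation is the same as in the interior case except that $I_{1}\equiv 0$ (which holds by the symmetry $(x,y)\mapsto(-x,y)$ sending $\gamma(h)$ to $-\gamma(h)$), and your write-up simply makes that reduction explicit, including the annulus-independent identity $\oint_{\gamma(h)}y^{3}\,dx=\tfrac{12h}{7}I_{0}+\tfrac{3}{7}I_{2}$ and the bookkeeping of constants.
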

\begin{proof}
\noindent It is similar to the   proof   in the exterior case, with the only exception that  $I_{1}=0$.
\end{proof}

\subsection{Computation of $M_2$}
\label{aaa}
\noindent
If $M_1=0$,
the   Iliev formula \cite{Iliev98} for the second Melnikov function  $M_{2}(h)$ reads
\begin{eqnarray*}
M_{2}(h)&=&\int_{H=h}[G_{1h}(x, y)P_{2}(x, h)-G_{1}(x, y)P_{2h}(x, h)]dx \\
& & -\int_{H=h}\frac{F(x,y)}{y}(f_{x}(x, y, 0)+g_{y}(x, y, 0))dx\\
& &+\int_{H=h}g_{\epsilon}(x, y, 0)dx-f_{\epsilon}(x, y, 0)dy
\end{eqnarray*}
where
$$ F(x,y)= \int^{y}_{0}f(x,s,0)ds-\int^{x}_{0}g(s,0,0)ds, \ \ \ G(x, y)= g(x, y, 0)+ F_{x}(x,y) $$
and
 $G_{1}(x, y)$, $G_{2}(x, y)$ are the odd and even parts of $G(x, y)$ with respect to y. Thus if
$$G(x, y)=y[(\lambda_{1,1}+\gamma_{2,1})+ (\gamma_{3,1}+2\lambda_{4,1})x+(\gamma_{6,1}+3\lambda_{8,1})x^{2}+y^{2}(\gamma_{9,1}+\frac{\lambda_{7,1}}{3})]+y^{2}[(\gamma_{5,1}+
\frac{\lambda_{3,1}}{2})+(\gamma_{7,1}+\lambda_{6,1})x]$$
then\\ \noindent $G(x, y)=G_{1}(x, y)+ G_{2}(x, y)$, \ \ \ $G_{1}(x, y)=yp_{1}(x,y^{2})$, \ \ \  $G_{2}(x, y)=p_{2}(x,y^{2})$\\
\noindent with \\ \noindent $p_{1}(x,y^{2})=(\lambda_{1,1}+\gamma_{2,1})+ (\gamma_{3,1}+2\lambda_{4,1})x+(\gamma_{6,1}+3\lambda_{8,1})x^{2}+y^{2}(\gamma_{9,1}+\frac{\lambda_{7,1}}{3})$\\ \noindent and \\ \noindent $p_{2}(x,y^{2})=y^{2}[(\gamma_{5,1}+
\frac{\lambda_{3,1}}{2})+(\gamma_{7,1}+\lambda_{6,1})x]$\\ \\
\noindent $\bullet$ $P_{2}(x, h)$ is the polynomial $P_{2}(x, h)=\int^{x}_{0}P_{2}(s, 2h+2U(s))ds= 2hx(\gamma_{5,1}+\frac{\lambda_{3,1}}{2})+hx^{2}(\gamma_{7,1}+\lambda_{6,1}) \noindent\\ \\+\frac{x^{3}}{3}(\gamma_{5,1}+\frac{\lambda_{3,1}}{2})
+\frac{x^{4}}{4}(\gamma_{7,1}+\lambda_{6,1})-\frac{x^{5}}{10}(\gamma_{5,1}+\frac{\lambda_{3,1}}{2})-\frac{x^{6}}{12}(\gamma_{7,1}+\lambda_{6,1})$\\
\noindent\\ $\bullet$ We note that
\begin{equation}\label{tu}
 G_{1h}(x, y)=G_{1y}(x, y)/y= (\lambda_{1,1}+\gamma_{2,1})/y +(\gamma_{3,1}+2\lambda_{4,1})\frac{x}{y}+(\gamma_{6,1}+3\lambda_{8,1})\frac{x^{2}}{y}+3y(\gamma_{9,1}+\frac{\lambda_{7,1}}{3}).
\end{equation}
\noindent $\bullet$
$g(x,y,0)=\gamma_{0,1}+\gamma_{1,1}x+\gamma_{4,1}x^{2}+\gamma_{8,1}x^{3}+y(\gamma_{2,1}+\gamma_{3,1}x+\gamma_{6,1}x^{2})+y^{2}(\gamma_{5,1}+\gamma_{7,1}x)+
\gamma_{9,1}y^{3}$\\ \\
\noindent $\bullet$ $F(x,y)=\int^{y}_{0}f(x,s,0)ds-\int^{x}_{0}g(s,0,0)ds=\lambda_{0,1}y+\lambda_{1,1}xy-\gamma_{0,1}x+\frac{\lambda_{2,1}}{2}y^{2}-
\frac{\gamma_{1,1}}{2}x^{2}+\frac{\lambda_{3,1}}{2}xy^{2}$ \noindent\\ \\$+\lambda_{4,1}x^{2}y-\frac{\gamma_{4,1}}{3}x^{3}+\frac{\lambda_{5,1}}{3}y^{3}+
\frac{\lambda_{6,1}}{2}x^{2}y^{2}+\frac{\lambda_{7,1}}{3}xy^{3}+\lambda_{8,1}x^{3}y+\frac{\lambda_{9,1}}{4}y^{4}-\frac{\gamma_{8,1}}{4}x^{4}$.\\
\noindent\\ $\bullet$ Then\\ \\ \noindent$-\frac{F(x,y)}{y}=-\frac{1}{y}(\int^{y}_{0}f(x,s,0)ds-\int^{x}_{0}g(s,0,0)ds)=-\lambda_{0,1}-\lambda_{1,1}x+\gamma_{0,1}\frac{x}{y}-\frac{\lambda_{2,1}}{2}y+
\frac{\gamma_{1,1}}{2}\frac{x^{2}}{y}-\frac{\lambda_{3,1}}{2}xy$ \noindent\\ \\ $-\lambda_{4,1}x^{2}+\frac{\gamma_{4,1}}{3}\frac{x^{3}}{y}-\frac{\lambda_{5,1}}{3}y^{2}-
\frac{\lambda_{6,1}}{2}x^{2}y-\frac{\lambda_{7,1}}{3}xy^{2}-\lambda_{8,1}x^{3}-\frac{\lambda_{9,1}}{4}y^{3}+\frac{\gamma_{8,1}}{4}\frac{x^{4}}{y}$.\\

\noindent In fact:
$$\int^{y}_{0}f(x,s,0)ds=\lambda_{0,1}y+\lambda_{1,1}xy+\lambda_{2,1}\frac{y^{2}}{2}+\lambda_{3,1}x\frac{y^{2}}{2}+\lambda_{4,1}x^{2}y+\lambda_{5,1}\frac{y^{3}}{3}+\lambda_{6,1}x^{2}\frac{y^{2}}{2}+\lambda_{7,1}\frac{xy^{3}}{3}+
\lambda_{8,1}x^{3}y+\lambda_{9,1}\frac{y^{4}}{4}$$

$$\int^{x}_{0}g(s,0,0)ds=\gamma_{0,1}x+\gamma_{1,1}\frac{x^{2}}{2}+\gamma_{4,1}\frac{x^{3}}{3}+\gamma_{8,1}\frac{x^{4}}{4}$$

\vskip0.2cm
\subsubsection{The interior Duffing oscillator}
Lemma \ref{14} implies easely the linear independence
of the functions $I_{0}(h)$, $ hI_{0}(h)$,  $I_{1}(h)$ and $I_{2}(h)$. As $I_{1}=c(4h-3)$
then $M_1=0$
implies
\begin{eqnarray}\label{peru}
\lambda_{1,1}+\gamma_{2,1}=0\\
\lambda_{7,1}+3\gamma_{9,1}=0\\
2\lambda_{4,1}+\gamma_{3,1}=0\\
\gamma_{6,1}+3\lambda_{8,1}=0
\end{eqnarray}

\begin{proposition}
\label{m2int}
The function $M_{2}(h)$ has the follows form:
\begin{eqnarray}\label{mm}
M_{2}(h)=(\alpha_{0}+4\alpha_{1}h)I_{0}+(\beta_{0}+4h\beta_{1})I_{1}+\rho I_{2}
\end{eqnarray}
\noindent where $$\alpha_{0}=-\lambda_{0,1}(\lambda_{3,1}+2\gamma_{5,1})+\lambda_{1,2}+\gamma_{2,2}$$
\noindent $$\alpha_{1}=(\lambda_{3,1}+2\gamma_{5,1})(-\frac{1}{7}\lambda_{8,1}-\lambda_{5,1})$$\\
\noindent $\beta_{0}= -(\lambda_{3,1}+2\gamma_{5,1})(\lambda_{1,1}-\frac{1}{8}\lambda_{7,1})+2(\lambda_{6,1}+\gamma_{7,1})(\lambda_{0,1}+2\lambda_{4,1}-2\lambda_{7,1})+
2\lambda_{4,2}+\gamma_{3,2}$
\noindent \\ \\$\beta_{1}=-\frac{1}{2}\lambda_{7,1}(\lambda_{3,1}+2\gamma_{5,1})+3\lambda_{7,1}(\lambda_{6,1}+\gamma_{7,1})$
\noindent \\ \\$\rho= (\lambda_{3,1}+2\gamma_{5,1})(\lambda_{4,1}-\frac{1}{7}\lambda_{5,1}-\frac{8}{7}\lambda_{8,1})-2\lambda_{1,1}(\lambda_{6,1}+\gamma_{7,1})+\gamma_{6,2}+3\lambda_{8,2}+\frac{1}{7}\lambda_{7,2}+\frac{3}{7}\gamma_{9,2}$.
\end{proposition}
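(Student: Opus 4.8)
The plan is to substitute the explicit data assembled just above directly into the Iliev formula and then systematically reduce the resulting Abelian integrals to the basis $I_0,I_1,I_2$. Concretely, I would insert the expressions for $G_1$, for $G_{1h}$ from \eqref{tu}, for $P_2(x,h)$ and its derivative $P_{2h}=\partial_h P_2 = 2x(\gamma_{5,1}+\tfrac{\lambda_{3,1}}{2})+x^2(\gamma_{7,1}+\lambda_{6,1})$, for $-F(x,y)/y$, together with $f_x(x,y,0)+g_y(x,y,0)$ and the second-order data $g_\epsilon,f_\epsilon$, into the three lines of the formula. After expanding the products $G_{1h}P_2$, $G_1P_{2h}$ and $\tfrac{F}{y}(f_x+g_y)$, the integrand becomes a finite sum of monomials $c_{i,j}(h)\,x^iy^j$ with $j\in\{-1,0,1,2,3\}$; the term $-f_\epsilon\,dy$ I would turn into a $dx$-integral by integrating by parts on the closed oval, using $\oint x^k\,dy=-k\oint x^{k-1}y\,dx$.

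The essential simplification comes from integrating over the closed oval $\gamma(h)$. Every exact form integrates to zero, and every monomial with an even power of $y$ reduces, via $y^2=2h+x^2-\tfrac{x^4}{2}$, to $\oint(\text{polynomial in }x)\,dx=0$ and drops out. Only the odd powers $y^{2m+1}$ and the terms $x^i/y$ survive. The odd powers I would reduce by writing $y^{2m+1}=y\,(2h+x^2-\tfrac{x^4}{2})^m$, turning them into linear combinations of $I_k=\oint x^k y\,dx$; the factors $x^i/y$ produce $\oint x^iy^{-1}dx=I_i'(h)$, since $\partial y/\partial h=1/y$ on $\{H=h\}$. At this stage I would invoke the reduction relations of Section \ref{section1} (Lemma \ref{14}), namely $I_3=I_1$, $I_4=\tfrac{4h}{7}I_0+\tfrac{8}{7}I_2$ (equivalently the identity $\oint y^3dx=\tfrac{12h}{7}I_0+\tfrac{3}{7}I_2$ already used for $M_1$), together with the Picard--Fuchs system expressing $I_0',I_1',I_2'$ through $I_0,I_1,I_2$ with coefficients in $\R(h)$, so that every surviving integral is rewritten in the basis $I_0,I_1,I_2$. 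Here one also uses that in the interior case $I_1$ is elementary, $I_1=c(4h-3)$, which is what forces the $h$-linear shape of the coefficients.

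Finally I would impose the four relations \eqref{peru} coming from $M_1\equiv0$, that is $\lambda_{1,1}+\gamma_{2,1}=0$, $\lambda_{7,1}+3\gamma_{9,1}=0$, $2\lambda_{4,1}+\gamma_{3,1}=0$ and $\gamma_{6,1}+3\lambda_{8,1}=0$. These eliminate the high-degree-in-$h$ contributions and collapse the answer onto the functions $I_0,hI_0,I_1,hI_1,I_2$, after which collecting the coefficients of $I_0$, of $I_1$ and of $I_2$ produces exactly $(\alpha_0+4\alpha_1 h)$, $(\beta_0+4h\beta_1)$ and $\rho$, and a direct comparison with the expanded data gives the stated values of $\alpha_0,\alpha_1,\beta_0,\beta_1,\rho$.

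The main obstacle is computational bookkeeping rather than conceptual difficulty: the product $G_{1h}P_2$ alone pairs a four-term expression carrying both $1/y$ and $y$ pieces against the degree-six polynomial $P_2$, so one must track a large number of monomials and reduce each resulting integral, in particular handling the $x^i/y$ terms correctly through $I_i'\mapsto$ Picard--Fuchs. A secondary point requiring care is to confirm that, after applying \eqref{peru}, all contributions of $h$-degree higher than linear in the coefficients of $I_0$ and $I_1$ (and any $hI_2$ term, as well as the rational denominators introduced by the Picard--Fuchs reduction) cancel. This cancellation is guaranteed in principle, since $M_2$ is again a complete elliptic integral of the form predicted by the general theory, but it must be verified explicitly in order to pin down the five coefficients.
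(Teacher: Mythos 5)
Your proposal follows the same overall strategy as the paper (Iliev's formula, then reduction of all oval integrals to the basis $I_0,I_1,I_2$), but you defer the hypothesis $M_1\equiv 0$ to the very last step, and this makes you miss the observation on which the paper's proof actually rests. In the interior case $M_1\equiv 0$ gives the \emph{four} linear conditions $\lambda_{1,1}+\gamma_{2,1}=0$, $\lambda_{7,1}+3\gamma_{9,1}=0$, $2\lambda_{4,1}+\gamma_{3,1}=0$, $\gamma_{6,1}+3\lambda_{8,1}=0$, and these are precisely the four coefficients of $p_1(x,y^2)$: hence $p_1\equiv 0$, so $G_1\equiv 0$ and $G_{1h}\equiv 0$, and the entire first line of Iliev's formula --- the term $\int[G_{1h}P_2-G_1P_{2h}]\,dx$ that you single out as ``the main obstacle'' --- vanishes identically. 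Moreover, the same conditions reduce $f_x+g_y$ to $(\lambda_{3,1}+2\gamma_{5,1})y+2(\lambda_{6,1}+\gamma_{7,1})xy$, which is divisible by $y$; consequently $-\tfrac{F}{y}(f_x+g_y)$ is a \emph{polynomial} in $(x,y)$, no $x^i/y$ terms survive, and no derivative integrals $I_k'$ or Picard--Fuchs relations are needed anywhere in the interior case: only the elementary reductions $\oint x^k\,dx=0$, $y^2=2h+x^2-\tfrac{x^4}{2}$, $I_3=I_1$, $I_4=\tfrac{4h}{7}I_0+\tfrac{8}{7}I_2$ occur, and the coefficients $\alpha_i,\beta_i,\rho$ are read off directly. (This is exactly what distinguishes Proposition \ref{m2int} from the exterior case, Proposition \ref{m2ext}, where only three conditions hold, $p_1=(\gamma_{3,1}+2\lambda_{4,1})x\neq 0$, and the Picard--Fuchs reduction of $I_2',I_4',I_6'$ genuinely enters, producing the $(4h+1)^{-1}$ factor.) Your route is not logically wrong --- expanding symbolically and imposing the linear constraints afterwards restricts correctly to the constraint set --- but it forces you to compute, and then watch cancel, a large mass of terms that are identically zero under the hypothesis, including all the rational-in-$h$ coefficients whose cancellation you concede you would have to ``verify explicitly''; in the paper's order of operations there is nothing to verify. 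Two small inaccuracies: the reductions $I_3=I_1$ and $I_4=\tfrac{4h}{7}I_0+\tfrac{8}{7}I_2$ do not come from Lemma \ref{14} (that lemma gives asymptotic expansions near $h=0$), but from integration by parts on $\{H=h\}$, as in the computation of $M_1$; and $I_1=c(4h+1)$ --- it must vanish at the center value $h=-\tfrac14$ --- not $c(4h-3)$ (a typo of the paper that you copied).
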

\begin{proof}
\noindent According to the   Iliev formula
\begin{eqnarray*}
M_{2}& = &\int_{H=h}[G_{1h}(x, y)P_{2}(x, h)-G_{1}(x, y)P_{2h}(x, h)]dx\\
& & -\int_{H=h}\frac{F(x,y)}{y}(f_{x}(x, y, 0)+g_{y}(x, y, 0))dx\\ & &+\int_{H=h}g_{\epsilon}(x, y, 0)dx
 -f_{\epsilon}(x, y, 0)dy
\end{eqnarray*}
\noindent where \\
$$\int_{H=h}g_{\epsilon}dx-f_{\epsilon}dy=[\lambda_{1,2}+\gamma_{2,2}+ \frac{4}{7}(\lambda_{7,2}+3\gamma_{9,2})h]I_{0}+(2\lambda_{4,2}+\gamma_{3,2})I_{1} +[\gamma_{6,2}+3\lambda_{8,2}+\frac{1}{7}\lambda_{7,2}+\frac{3}{7}\gamma_{9,2}]I_{2}$$
\noindent By using (\ref{peru}), (13), (14) and (15)  we have: $p_{1}(x, y^{2})=0$ then $G_{1}(x,y)=0$ and (\ref{tu}) becomes zero.\\

\noindent Thus\\ \\
$M_{2}(h)=-\int_{H=h}\frac{F(x,y)}{y}(f_{x}+g_{y})dx+\oint_{H=h}g_{\epsilon}dx-f_{\epsilon}dy$\noindent\\$=
+[-\lambda_{0,1}(\lambda_{3,1}+2\gamma_{5,1})+\lambda_{1,2}+\gamma_{2,2}]I_{0}+[(\lambda_{3,1}+2\gamma_{5,1})(-\frac{1}{7}\lambda_{8,1}-\lambda_{5,1})]4hI_{0} $
\\ \\\noindent $+[-(\lambda_{3,1}+2\gamma_{5,1})(\lambda_{1,1}-\frac{1}{8}\lambda_{7,1})+2(\lambda_{6,1}+\gamma_{7,1})(\lambda_{0,1}+2\lambda_{4,1}-2\lambda_{7,1})+
2\lambda_{4,2}+\gamma_{3,2}]I_{1}$
\\ \\ \noindent $+[-\frac{1}{2}\lambda_{7,1}(\lambda_{3,1}+2\gamma_{5,1})+3\lambda_{7,1}(\lambda_{6,1}+\gamma_{7,1})]hI_{1}$
\\ \\  \noindent $+[(\lambda_{3,1}+2\gamma_{5,1})(\lambda_{4,1}-\frac{1}{7}\lambda_{5,1}-\frac{8}{7}\lambda_{8,1})-2\lambda_{1,1}(\lambda_{6,1}+\gamma_{7,1})+\gamma_{6,2}+3\lambda_{8,2}+\frac{1}{7}\lambda_{7,2}+\frac{3}{7}\gamma_{9,2}]I_{2}$.
\end{proof}
\subsubsection{The exterior Duffing oscillator}
\vskip0.2cm
\noindent
In a way similar to the interior Duffing oscillator, we conclude that if $M_1=0$ then
\begin{eqnarray}\label{per}
\lambda_{1,1}+\gamma_{2,1}=0\\
\lambda_{7,1}+3\gamma_{9,1}=0\\
\gamma_{6,1}+3\lambda_{8,1}=0
\end{eqnarray}

\begin{proposition}
\label{m2ext}
The function $M_{2}(h)$ has the follows form:
\begin{eqnarray}\label{second}
M_{2}(h)=(4h+1)^{-1}[(\alpha_{0}+4\alpha_{1}h+\alpha_{2}h^{2})I_{0}+(\beta_{0}+4h\beta_{1})I_{2}]
\end{eqnarray}
\noindent where $$\alpha_{0}=-\lambda_{0,1}(\lambda_{3,1}+2\gamma_{5,1})+\lambda_{1,2}+\gamma_{2,2}-\gamma_{0,1}(2\lambda_{4,1}+\gamma_{3,1})$$
\noindent $$\alpha_{1}=-\lambda_{0,1}(\lambda_{3,1}+2\gamma_{5,1})+\lambda_{1,2}+\gamma_{2,2}-
\frac{4}{7}(\lambda_{5,1}(\lambda_{3,1}+2\gamma_{5,1}))+\frac{4}{7}(\lambda_{7,2}+3\gamma_{9,2})-\frac{8}{7}(\lambda_{8,1}(\lambda_{6,1}+\gamma_{7,1}))+
\frac{\gamma_{4,1}}{3}(2\lambda_{4,1}+\gamma_{3,1})$$ \noindent $$+\frac{8}{15}(\gamma_{3,1}+2\lambda_{4,1})(\gamma_{5,1}+\frac{\lambda_{3,1}}{2})$$
\noindent $$\alpha_{2}=-
\frac{4}{7}(\lambda_{5,1}(\lambda_{3,1}+2\gamma_{5,1}))+\frac{4}{7}(\lambda_{7,2}+3\gamma_{9,2})-\frac{8}{7}(\lambda_{8,1}(\lambda_{6,1}+\gamma_{7,1}))$$
\noindent \\ $$\beta_{0}=-[2\lambda_{4,1}\lambda_{3,1}+\frac{\lambda_{3,1}\gamma_{3,1}}{2}+2\lambda_{4,1}\gamma_{5,1}+2\lambda_{1,1}\lambda_{6,1}+2\lambda_{1,1}\gamma_{7,1}
-\gamma_{6,2}-3\lambda_{8,2}-\frac{1}{7}\lambda_{7,2}-\frac{3}{7}\gamma_{9,2}$$ \noindent $$+ \frac{\lambda_{5,1}}{7}(\lambda_{3,1}+2\gamma_{5,1})+\frac{16}{7}(\lambda_{8,1}(\lambda_{6,1}+
\gamma_{7,1}))-5(2\lambda_{4,1}+\gamma_{3,1})(\frac{\gamma_{4,1}}{3}+\gamma_{0,1})+\frac{17}{15}(\gamma_{3,1}+2\lambda_{4,1})(\gamma_{5,1}+
\frac{\lambda_{3,1}}{2})]$$\\

\noindent $$\beta_{1}=-[2\lambda_{4,1}\lambda_{3,1}+\frac{\lambda_{3,1}\gamma_{3,1}}{2}+2\lambda_{4,1}\gamma_{5,1}+2\lambda_{1,1}\lambda_{6,1}+2\lambda_{1,1}\gamma_{7,1}
-\gamma_{6,2}-3\lambda_{8,2}-\frac{1}{7}\lambda_{7,2}-\frac{3}{7}\gamma_{9,2}$$  \noindent $$+\frac{\lambda_{5,1}}{7}(\lambda_{3,1}+2\gamma_{5,1})+
\frac{16}{7}(\lambda_{8,1}(\lambda_{6,1}+\gamma_{7,1}))-\frac{1}{5}(\gamma_{3,1}+2\lambda_{4,1})(\gamma_{5,1}+\frac{\lambda_{3,1}}{2})]$$
\end{proposition}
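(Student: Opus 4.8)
My plan is to follow the scheme already used for the interior annulus in Proposition \ref{m2int}: start from the Iliev formula for $M_2$, impose the vanishing of $M_1$, and then reduce every integral that occurs to the basis $I_0,I_2$. The decisive structural difference is that here only the three relations (\ref{per}) are forced, since $I_1\equiv0$ suppresses the condition $2\lambda_{4,1}+\gamma_{3,1}=0$. After substituting (\ref{per}) the polynomial $p_1$ collapses to $p_1(x,y^2)=(\gamma_{3,1}+2\lambda_{4,1})x$, so the odd part $G_1(x,y)=(\gamma_{3,1}+2\lambda_{4,1})\,xy$ no longer vanishes and the first bracket $\int_{H=h}[G_{1h}P_2-G_1P_{2h}]\,dx$ of the Iliev formula must be retained. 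First I would insert (\ref{per}) everywhere and expand $G_{1h}P_2-G_1P_{2h}$, $\frac{F}{y}(f_x+g_y)$ and $g_{\epsilon}\,dx-f_{\epsilon}\,dy$ into a linear combination of the monomial Abelian integrals $J_{k,m}=\oint_{\gamma(h)}x^ky^m\,dx$ with $m\in\{-1,1,3\}$, using the symmetry $(x,y)\mapsto(-x,y)$ of the exterior oval to discard at once every $J_{k,m}$ with $k$ odd.

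The weights $m=1$ and $m=3$ are disposed of without denominators. For $m=3$ one uses $y^2=2h+x^2-\frac12x^4$ to write $\oint x^ky^3\,dx=2hJ_{k,1}+J_{k+2,1}-\frac12J_{k+4,1}$, exactly as $\oint_{H=h}y^3\,dx$ was reduced in the computation of $M_1$. The remaining even-index $J_{k,1}=I_k$ are brought down to $I_0,I_2$ by the recursion coming from $\oint d(x^ay^3)=0$, namely $2ah\,I_{a-1}+(a+3)I_{a+1}-(\frac{a}{2}+3)I_{a+3}=0$ (for instance $I_4=\frac{4h}{7}I_0+\frac{8}{7}I_2$). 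This entire block is polynomial in $h$.

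The genuinely new phenomenon is the factor $(4h+1)^{-1}$, and it originates solely from the inverse-weight integrals $\oint x^ky^{-1}\,dx=I_k'$ produced by the terms $\frac{F}{y}(f_x+g_y)$ and $G_{1h}P_2$. Here two facts combine. Every such monomial carries at least one power of $x$, so after the parity cut the surviving integrals are $I_2',I_4',I_6'$ and never $I_0'$. The Picard-Fuchs relations, derived from $\oint d(x^ay)=0$ and $I_k=2hI_k'+I_{k+2}'-\frac12I_{k+4}'$, give $I_4'=I_0+I_2'$ and $I_6'=3I_2+I_0+I_2'$, hence reduce everything to $I_0,I_2,I_2'$, together with
\[
4h\,I_0'+I_2'=3I_0,\qquad (4h+1)\,I_2'=5I_2-I_0 .
\]
Since $I_0'$ is absent, the only denominator introduced is $4h+1$, so no spurious pole at $h=0$ is created, and substituting $I_2'=(5I_2-I_0)/(4h+1)$ puts $M_2$ into the announced form $(4h+1)^{-1}[(\alpha_0+4\alpha_1h+\alpha_2h^2)I_0+(\beta_0+4h\beta_1)I_2]$.

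The main obstacle is bookkeeping rather than theory: $P_2$ already carries $x^5$ and $x^6$, so after multiplication by $G_{1h}$ and by the quadratic $f_x+g_y$ one must track a dozen inverse-weight monomials, carry the coefficients of (\ref{per}) faithfully through all the reductions, and check that the $x$-odd pieces really cancel while the numerators collapse to the prescribed degrees (two in $h$ for $I_0$, one for $I_2$). I would grade the computation by the weight $m\in\{-1,1,3\}$ and by parity in $x$, reduce each graded piece separately, and only at the very end substitute the two Picard-Fuchs relations; a final numerical comparison of both sides at several values of $h$, for random admissible parameters satisfying (\ref{per}), would validate the explicit coefficients $\alpha_0,\alpha_1,\alpha_2,\beta_0,\beta_1$.
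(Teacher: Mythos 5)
Your plan follows the paper's own proof essentially step for step: the Iliev formula, the three relations (\ref{per}) (with $2\lambda_{4,1}+\gamma_{3,1}=0$ correctly \emph{not} imposed because $I_1\equiv 0$), the resulting nonvanishing $G_1=(\gamma_{3,1}+2\lambda_{4,1})xy$, reduction of every term to $I_0,I_2$ and the derivative integrals $I_2',I_4',I_6'$ (never $I_0'$), and elimination of the latter by Picard--Fuchs relations, which is exactly where the paper also produces the factor $(4h+1)^{-1}$. Your identities $I_4'=I_0+I_2'$, $I_6'=3I_2+I_0+I_2'$ and $(4h+1)I_2'=5I_2-I_0$ are algebraically equivalent to the relations the paper cites, so the proposal is correct and essentially identical in approach.
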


\begin{proof}
\noindent The same way of proof of property 3, We use also the formula of  Iliev \cite{Iliev98}:
$$M_{2}(h)=\int_{H=h}[G_{1h}(x, y)P_{2}(x, h)-G_{1}(x, y)P_{2h}(x, h)]dx$$  \noindent$$ -\int_{H=h}\frac{F(x,y)}{y}(f_{x}(x, y, 0)+g_{y}(x, y, 0))dx$$ \noindent$$+\int_{H=h}g_{\epsilon}(x, y, 0)dx-f_{\epsilon}(x, y, 0)dy
$$
\noindent \\ By using (\ref{per}), (18) and (19) we have $p_{1}(x, y^{2})=(\gamma_{3,1}+2\lambda_{4,1})x$  and (\ref{tu}) becomes $$ G_{1h}(x, y)=(\gamma_{3,1}+2\lambda_{4,1})\frac{x}{y}$$\\
\noindent Then\\ \\ $\bullet$ $\int_{H=h}[G_{1h}(x, y)P_{2}(x, h)-G_{1}(x, y)P_{2h}(x, h)]dx=-2(\gamma_{3,1}+2\lambda_{4,1})(\gamma_{5, 1}+\frac{\lambda_{3,1}}{2})I_{2}$\noindent$$+2h(\gamma_{3,1}+2\lambda_{4,1})(\gamma_{5, 1}+\frac{\lambda_{3,1}}{2})I'_{2}+\frac{1}{3}(\gamma_{3,1}+2\lambda_{4,1})(\gamma_{5, 1}+\frac{\lambda_{3,1}}{2})I'_{4}-\frac{1}{10}(\gamma_{3,1}+2\lambda_{4,1})(\gamma_{5, 1}+\frac{\lambda_{3,1}}{2})I'_{6}$$
\noindent and by using the Picards-Fuchs equations (see for instance \cite{Deligne}, for more details) we have \\ \\  \noindent $$I'_{2}=(4h+1)^{-1}(5I_{2}-I_{0})$$ \noindent $$I'_{4}=(4h+1)^{-1}(4hI_{0}+5I_{2})$$ \noindent $$I'_{6}=(4h+1)^{-1}[\frac{4}{3}(4h+1)I_{2}+\frac{4}{3}h(5I_{2}-I_{0})+\frac{4}{3}(4hI_{0}+5I_{2})]$$.\\
\noindent\\  Then $$\int_{H=h}[G_{1h}(x, y)P_{2}(x, h)-G_{1}(x, y)P_{2h}(x, h)]dx $$ \noindent $$=(4h+1)^{-1}(\gamma_{3,1}+2\lambda_{4,1})(\gamma_{5, 1}+\frac{\lambda_{3,1}}{2})[(\frac{4}{5}h-\frac{17}{15})I_{2}-\frac{32}{15}hI_{0}]$$\\

\vskip0.2cm
\noindent
$$\int_{H=h}g_{\epsilon}dx-f_{\epsilon}dy=[\lambda_{1,2}+\gamma_{2,2}+ \frac{4}{7}(\lambda_{7,2}+3\gamma_{9,2})h]I_{0} +[\gamma_{6,2}+3\lambda_{8,2}+\frac{1}{7}\lambda_{7,2}+\frac{3}{7}\gamma_{9,2}]I_{2}$$
\noindent  By using (\ref{per}) we have also $$(f_{x}+g_{y})=(2\lambda_{4,1}+\gamma_{3,1})x+(\lambda_{3,1}+2\gamma_{5,1})y+2(\lambda_{6,1}+\gamma_{7,1})xy$$
\noindent\\ Then\\ \\ $\bullet$  $-\int_{H=h}\frac{F(x,y)}{y}(f_{x}+g_{y})dx+\oint_{H=h}g_{\epsilon}dx-f_{\epsilon}dy$\noindent\\ \\$=
[-\lambda_{0,1}(\lambda_{3,1}+2\gamma_{5,1})+\lambda_{1,2}+\gamma_{2,2}]I_{0}$\\ \\ \noindent $+[-\frac{4}{7}(\lambda_{5,1}(\lambda_{3,1}+2\gamma_{5,1}))+\frac{4}{7}(\lambda_{7,2}+3\gamma_{9,2})-\frac{8}{7}(\lambda_{8,1}(\lambda_{6,1}+\gamma_{7,1}))]hI_{0}$
\\ \\   \noindent  $-[2\lambda_{4,1}\lambda_{3,1}+\frac{\lambda_{3,1}\gamma_{3,1}}{2}+2\lambda_{4,1}\gamma_{5,1}+2\lambda_{1,1}\lambda_{6,1}+2\lambda_{1,1}\gamma_{7,1}
-\gamma_{6,2}-3\lambda_{8,2}-\frac{1}{7}\lambda_{7,2}-\frac{3}{7}\gamma_{9,2}+\frac{\lambda_{5,1}}{7}(\lambda_{3,1}+2\gamma_{5,1})$\noindent \\ \\$+\frac{16}{7}(\lambda_{8,1}(\lambda_{6,1}+\gamma_{7,1}))]I_{2}$\\ \\ \noindent $+\gamma_{0,1}(2\lambda_{4,1}+\gamma_{3,1})I'_{2}+
\frac{\gamma_{4,1}}{3}(2\lambda_{4,1}+\gamma_{3,1})I'_{4}$ \\

\noindent Or we have $I'_{2}=(4h+1)^{-1}(5I_{2}-I_{0})$ and $I'_{4}=(4h+1)^{-1}(4hI_{0}+5I_{2})$.\\
\noindent Then we can obtain by using the above information proposition 2.
\end{proof}


\section{Picards-Fuchs equations }
\label{section1}
The results of this section are known, or can be easily deduced, see \cite{Iliev99, jezo94, Zol91}.

First we note that the affine complex algebraic curve
$$
\Gamma_h= \{(x,y)\in \C: H(x,y)=h \}
$$
is smooth for $h \neq 0,-1/4$ and has the topological type of a torus with two removed points $\infty^\pm$ (at "infinity"). Its homology group is therefore of rang three, the corresponding De Rham group has for generators the (restrictions of) polynomial differential one-forms
$$
ydx, \;xy dx, \;x^2ydx
$$
which are also generators of the related Brieskorn-Petrov $\C[h]$-module \cite{gavr98}.

\begin{lem}
\noindent\\The integrals $I_{i}$, $i=0,2$, satisfy the following system of Picard-Fuchs:
\begin{eqnarray}\label{area}
I_0(h)&=&\frac{4}{3}hI'_0(h)+\frac{1}{3}I'_2(h) \\
I_2(h)&=&\frac{4}{15}hI'_0(h)+\left(\frac{4}{5}h+\frac{4}{15}\right)I'_2(h)
\end{eqnarray}
\end{lem}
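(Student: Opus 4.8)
The plan is to derive both identities from three elementary facts about the integrals $I_k(h)=\oint_{\gamma(h)} x^k y\,dx$ and the auxiliary integrals $K_k(h)=\oint_{\gamma(h)} \frac{x^k}{y}\,dx$, working on the level curve where $y^2=2h+x^2-\tfrac12 x^4$. Since we only need $I_0$ and $I_2$, it suffices to track $K_0,K_2,K_4,K_6$.

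First I would differentiate under the integral sign. On $\gamma(h)$ one has $\partial y/\partial h=1/y$, and the family of ovals depends smoothly on $h$ for $h\neq 0,-\tfrac14$ (there $\Gamma_h$ is smooth, as recalled at the start of the section), so $I_k'(h)=\oint_{\gamma(h)} x^k\,\partial_h y\,dx=K_k(h)$; in particular $I_0'=K_0$ and $I_2'=K_2$. Second, I would lower the weight using the equation of the curve: writing $y=y^2/y$ and substituting $y^2=2h+x^2-\tfrac12 x^4$ gives
\[
I_k = 2h\,K_k + K_{k+2} - \tfrac12 K_{k+4}.
\]
Third, the essential algebraic reduction comes from integrating the exact one-form $d(x^k y)$ over the closed oval, which contributes zero. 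Using $dy=\tfrac{x-x^3}{y}\,dx$ (from $dH=0$ on $\gamma(h)$), this yields $k\,I_{k-1}+K_{k+1}-K_{k+3}=0$, i.e. $K_{k+3}=K_{k+1}+k\,I_{k-1}$; taking $k=1$ and $k=3$ produces $K_4=K_2+I_0$ and $K_6=K_2+I_0+3I_2$.

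To finish, I substitute these two relations into the weight-lowering formula for $k=0$ and $k=2$. For $k=0$ this gives $\tfrac32 I_0 = 2h K_0+\tfrac12 K_2$, i.e. $I_0=\tfrac43 h K_0+\tfrac13 K_2$, which is the first equation after replacing $K_0=I_0'$, $K_2=I_2'$. For $k=2$ one gets $\tfrac52 I_2=(2h+\tfrac12)K_2+\tfrac12 I_0$; eliminating $I_0$ with the relation just obtained and replacing $K_0,K_2$ by $I_0',I_2'$ yields $I_2=\tfrac{4}{15}h I_0'+(\tfrac45 h+\tfrac{4}{15})I_2'$, the second equation. Note the derivation is purely algebraic in $H$ and uses only that $\gamma(h)$ is a cycle, so it applies uniformly in the interior and exterior cases.

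The only genuine obstacle is justifying the two formal operations rather than the computation itself: that all boundary terms vanish (which holds because $\gamma(h)$ is a closed cycle, so every exact form integrates to zero) and that differentiation under the integral sign is legitimate (which follows from the smooth dependence of the oval on $h$ away from the critical values $h=0,-\tfrac14$). Once these are granted, everything reduces to linear algebra over $\R(h)$.
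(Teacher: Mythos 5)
Your proof is correct: I checked all three ingredients (the Gelfand--Leray differentiation $I_k'=K_k$, the weight-lowering identity $I_k=2hK_k+K_{k+2}-\tfrac12 K_{k+4}$ from $y^2=2h+x^2-\tfrac12x^4$, and the reduction $K_{k+3}=K_{k+1}+kI_{k-1}$ from $\oint d(x^ky)=0$ with $dy=\tfrac{x-x^3}{y}dx$), and the final elimination does give $I_0=\tfrac43 hI_0'+\tfrac13 I_2'$ and $I_2=\tfrac{4}{15}hI_0'+\bigl(\tfrac45 h+\tfrac{4}{15}\bigr)I_2'$ exactly as stated. Note that the paper itself gives no argument here: its ``proof'' is a citation to Lemma 5 of Petrov's paper on complex zeros of an elliptic integral, and what you have written is precisely the classical self-contained derivation that this citation points to. So your proposal is not a different route so much as the filled-in version of the omitted one, and it has the added virtue of making explicit (as you observe) that the relations hold for any cycle on $\Gamma_h$, hence apply verbatim to both the interior and exterior families used later in the paper.
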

\begin{proof}
\noindent See proof of lemma$5$ of Petrov \cite{Pet87} for details.
\end{proof}
\noindent The above equations imply the following asymptotic expansions near $h=0$ (they agree with the Picard-Lefshetz formula)

\begin{lem}\label{14}
\noindent The integrals $I_i$, $i=0,2$, have the following asymptotic expansions in the neighborhood of $h=0$:
\noindent
\begin{eqnarray*}
I_{0}(h)&=&(-h+\frac{3}{8}h^{2}-\frac{35}{64}h^{3}+...)\ln h+\frac{4}{3}+a_{1}h+a_{2}h^{2}+...\\
I_{2}(h)&=&(\frac{1}{2}h^{2}-\frac{5}{8}h^{3}-\frac{315}{256}h^{4}...)\ln h+\frac{16}{15}+4h+b_{2}h^{2}+...\\
\end{eqnarray*}
\end{lem}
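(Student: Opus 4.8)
The plan is to extract the expansions directly from the Picard--Fuchs system (\ref{area}) of the previous lemma, treating $h=0$ as a regular singular point. First I would eliminate $I_2$ from the two relations: solving the first equation for $I_2'$ and substituting into the second produces a single Fuchsian equation
\begin{equation*}
4h(1+4h)\,I_0''(h)+3\,I_0(h)=0 ,
\end{equation*}
with singular points at $h=0,-1/4,\infty$. The indicial equation at $h=0$ is $r(r-1)=0$, so the exponents are $0$ and $1$. Since they differ by an integer, Frobenius theory (equivalently the Picard--Lefschetz formula applied to the cycle vanishing at the saddle $h=0$) guarantees that every solution has the form $A(h)\ln h+B(h)$ with $A,B$ holomorphic at $0$ and $A(h)=O(h)$. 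This is exactly the a priori structure asserted in the statement and the source of the logarithmic terms.

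Next I would compute the coefficients of $\ln h$. The vector $(A_0(h),A_2(h))$ of logarithmic coefficients of $I_0$ and $I_2$ equals $\tfrac{1}{2\pi i}$ times the jump of $(I_0,I_2)$ under monodromy around $h=0$; by Picard--Lefschetz it is the period of the vanishing cycle, hence a holomorphic solution of the homogeneous system (\ref{area}). Writing $A_0(h)=\sum_{k\ge 1}p_k h^k$ and substituting into $4h(1+4h)A_0''+3A_0=0$ gives the recursion
\begin{equation*}
p_{k+1}=-\frac{16k(k-1)+3}{4k(k+1)}\,p_k,\qquad k\ge 1 .
\end{equation*}
Normalising by $p_1=-1$ (the Picard--Lefschetz normalisation, fixed by the intersection index of the vanishing cycle with $\gamma(h)$) yields $A_0(h)=-h+\frac38h^2-\frac{35}{64}h^3+\cdots$; then the relation $A_2=(\frac45+\frac{12}{5}h)A_0-(\frac45h+\frac{16}{5}h^2)A_0'$, a consequence of (\ref{area}), gives $A_2(h)=\frac12h^2-\frac58h^3-\cdots$. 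This settles all logarithmic coefficients.

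It remains to fix the needed coefficients of the holomorphic part. The constant terms are the values of the integrals over the homoclinic (figure-eight) loop $\gamma(0)$, which are elementary: with $y=x\sqrt{1-x^2/2}$ on the right loop one computes $I_0(0)=2\int_0^{\sqrt2}x\sqrt{1-x^2/2}\,dx=\frac43$ and $I_2(0)=2\int_0^{\sqrt2}x^3\sqrt{1-x^2/2}\,dx=\frac{16}{15}$ (Beta integrals). The linear coefficient of $I_2$ comes from the relation $I_2'=(4h+1)^{-1}(5I_2-I_0)$ evaluated at $h=0$: since $A_2(h)=O(h^2)$, the derivative $I_2'$ is finite at $0$ and $I_2'(0)=5\cdot\frac{16}{15}-\frac43=4$, giving the term $4h$.

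The main subtlety, and the only genuine obstacle, is that the remaining coefficients $a_1,a_2,b_2$ are \emph{not} determined by this local analysis: replacing $(B,D)$ by $(B+cA_0,\,D+cA_2)$ leaves both the Picard--Fuchs equations and the logarithmic coefficients unchanged, so there is a one-parameter transcendental ambiguity (morally a connection constant between $h=0$ and $h=-1/4$). This is why they appear as undetermined constants in the statement. Everything else is the routine recursion above, and the expansions so obtained agree, as claimed, with the Picard--Lefschetz formula.
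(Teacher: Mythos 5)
Your argument is correct, and it is genuinely different from the paper's for the simple reason that the paper contains no proof: Lemma \ref{14} is disposed of by a citation to \cite{Gav13}. Your route makes the lemma self-contained, resting only on the Picard--Fuchs system (\ref{area}). I checked the key reductions: eliminating $I_2$ from (\ref{area}) does give $4h(1+4h)I_0''+3I_0=0$ together with the companion identity $I_2=\left(\frac{4}{5}+\frac{12}{5}h\right)I_0-\left(\frac{4}{5}h+\frac{16}{5}h^2\right)I_0'$; the exponents at $h=0$ are $0,1$; the recursion $p_{k+1}=-\frac{16k(k-1)+3}{4k(k+1)}p_k$ with $p_1=-1$ reproduces $-h+\frac{3}{8}h^2-\frac{35}{64}h^3+\cdots$, and the identity above then yields $\frac{1}{2}h^2-\frac{5}{8}h^3+\cdots$ for the logarithmic part of $I_2$; the Beta-integral values $\frac{4}{3}$ and $\frac{16}{15}$ over one loop of $\{H=0\}$, and the value $I_2'(0)=5\cdot\frac{16}{15}-\frac{4}{3}=4$ (legitimate precisely because $A_2=O(h^2)$, which you establish beforehand), are all correct. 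Your closing observation, that $a_1,a_2,b_2$ are invisible to any local analysis because $(B_0,B_2)\mapsto(B_0+cA_0,B_2+cA_2)$ preserves the equations and every stated coefficient, is exactly the right explanation of why the lemma may leave them unspecified. A bonus of your method: pushed one order further it gives $p_4=\frac{1155}{1024}$, hence $+\frac{315}{256}h^4$ in the logarithmic part of $I_2$; since the ratio $-\frac{16k(k-1)+3}{4k(k+1)}$ is negative for all $k\geq 1$, these signs must alternate, so the minus sign on the $h^4$ term printed in the lemma appears to be a typo, which your approach detects and the paper's citation cannot.

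The one step you should not leave as a bare assertion is the normalization $p_1=-1$. The intersection index $(\gamma\circ\delta_0)=\pm 1$ in the Picard--Lefschetz formula only tells you that the jump of $I_0$ across the cut is $\pm 2\pi i$ times the integral over the vanishing cycle taken once; it does not fix the scale of that integral. To get $|p_1|=1$ you need the local computation at the saddle: in the Morse model $H=\frac{y^2-x^2}{2}$ (exact to leading order here, with $\det\mathrm{Hess}\,H(0,0)=-1$) one finds $\oint_{\delta_0(h)}y\,dx=2\pi i\,h+O(h^2)$, and the sign of $p_1$ is then forced by the real asymptotics, since the period $I_0'(h)$ tends to $+\infty$ while $\ln|h|\to-\infty$ as $h\to 0$ along the real axis. (Relatedly, on the interior range $-\frac{1}{4}<h<0$ the symbol $\ln h$ requires a branch choice, or should be read as $\ln|h|$; that imprecision sits in the statement itself, not in your proof.) With that short local analysis added, your proof is complete.
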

\begin{proof}
\noindent For proof see \cite{Gav13}.
\end{proof}
\section{Zeros of Abelian integrals in a complex domain}
\label{zeros}
\noindent
 Our goal will be to find the upper bounds number of the zeroes of the Abelian integrals defined in \eqref{first} and \eqref{second} on the interval of existence of the ovals $\{\gamma(h)\}$.\\
\noindent All families of cycles will depend continuously on a parameter $h$ and will be defined without ambiguity in the complex half-plane ${h: Im(h)>0}$. This will allows a continuation on $\C$ along any curve avoiding the real critical values of $H$.\\
\noindent We use the well known Petrov method which is based on the argument principle. This gives an information on the complex limit cycles of the system in the interior and exterior eight-loop, see later lemmas \ref{g1}, \ref{z1} and \ref{z3}, respectively.\\
\noindent Our primary motivation was that the complex methods we use, are necessary to understand the bifurcations from the separatrix eight-loop.  Another reason is, that the complexity of the bifurcation set of $M_{1}$, $M_{2}$ in a complex domain is directly related to the number of the zeros of $M_{1}$, $M_{2}$. This observation can be possibly  generalized to higher genus curves.\\

\subsection{The interior eight-loop case}
\label{int}
\noindent In this section, we consider the interior eight-loop case, with period annulus as shown in fig.\ref{figa} (hatched part).
Let $\gamma(h)~\subset\{H=h\}$ be the continuous family of ovals of the non-perturbed system defined on the maximal open interval $\Sigma=(h_{c}, h_{s})$,
where for $h=h_{c}=-\frac{1}{4}$ the oval degenerates into two centers $\delta_{-1}$, $\delta_{1}$ respectively at the singular point $(-1,0)$, $(1, 0)$  and for $h=h_{s}=0$ every oval $\delta_{-1}$ or $\delta_{1}$ becomes a homoclinic loop of the Hamiltonian $dH=0$.\\
\noindent The family ${\delta_{h}}$ represents a continuous family of cycles vanishing at the centers $\delta_{-1}$ and $\delta_{1}$.
\begin{figure}
\begin{center}
 \def\svgwidth{9cm}
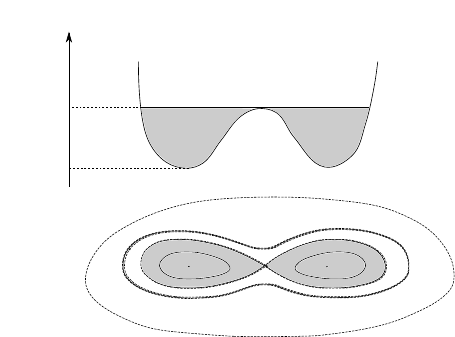
\end{center}
\caption{Phase portrait of $X_0$ and the graph of  $- \frac{x^2}{2} + \frac{x^4}{4}$}
\label{figa}
\end{figure}
\begin{thm}
\label{cyclicityint}
The maximal cyclicity of the interior
 period annulus $\{(x,y)\in \R^2: H(x,y)>0 \}$ of $dH=0$ with respect to   one-parameter analytic deformation (\ref{f}) is
\begin{description}
\item[(i) ]  three, if $M_1\neq 0$.
\item[(ii)] four, if $M_1=0$ but $M_2\neq 0$.
\end{description}
 \end{thm}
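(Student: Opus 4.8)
The plan is to reduce the cyclicity of the interior annulus to a count of the real zeros of the first non-vanishing Melnikov function on $\Sigma=(h_c,h_s)=(-\frac{1}{4},0)$, to bound that count by Petrov's argument-principle method in the complex $h$-plane, and finally to check that the bound is attained. By the expansion \eqref{777} and the Poincar\'e--Pontryagin--Melnikov theory recalled in the introduction (see \cite{Gav05,gavr99,gavr05} and the Iliev/Fran\c{c}oise scheme of \cite{Iliev98,Francoise}), the number of limit cycles bifurcating from the annulus is bounded by the number of isolated zeros, with multiplicity, of the first coefficient $M_k$ in \eqref{777} that does not vanish identically on $\Sigma$; moreover this bound is sharp once the admissible $M_k$ realize the maximal number of simple zeros. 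Thus case (i) is the situation $k=1$ and case (ii) the situation $k=2$, and it suffices to prove that $M_1$ has at most three zeros and $M_2$ at most four zeros on $\Sigma$, each count being attained.

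For the upper bounds I would use the explicit normal forms already obtained. In case (i), \eqref{first} gives $M_1=\alpha_0(h)I_0+\alpha_1 I_1+\alpha_2 I_2$ with $\alpha_0$ affine in $h$; since $I_1=c(4h-3)$ is a polynomial, $M_1$ lies in the linear span of the four functions $I_0,\,hI_0,\,I_1,\,I_2$. In case (ii), Proposition \ref{m2int} gives $M_2=(\alpha_0+4\alpha_1 h)I_0+(\beta_0+4\beta_1 h)I_1+\rho I_2$, which (again using $I_1=c(4h-3)$) lies in the span of the five functions $I_0,\,hI_0,\,I_1,\,hI_1,\,I_2$. I would then show that each of these families is an extended complete Chebyshev system on $\Sigma$, so that any nontrivial combination of $n$ of them has at most $n-1$ zeros there; this yields the bounds $3$ and $4$ respectively, which is exactly the content of the later Lemmas \ref{g1} and \ref{z1}.

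Establishing this Chebyshev property is the hard part, and it is where the complex machinery of Section \ref{zeros} enters. I would continue the vanishing cycle $\gamma(h)$ and the integrals $I_j(h)$ from the half-plane $\{\operatorname{Im}h>0\}$ around the critical values $h=-\frac{1}{4}$ and $h=0$, using the Picard--Fuchs system of Section \ref{section1} to propagate the integrals, the logarithmic asymptotics of Lemma \ref{14} (which agree with the Picard--Lefschetz formula) to describe the branching at $h=0$, and the analyticity of the $I_j$ at $h=-\frac{1}{4}$. One then applies the argument principle to $M_k$ along the boundary of a suitable cut domain $D\supset\Sigma$: bounding the increment of $\arg M_k$ over each arc of $\partial D$ controls the number of complex zeros in $D$, hence the number of real zeros on $\Sigma$. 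The delicate points, and the main obstacle, are the careful bookkeeping of the argument increment and the control of the logarithmic terms under the monodromy --- precisely Petrov's trick.

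It remains to prove sharpness. Here the explicit formulas for the coefficients show that $c_0,c_1,\alpha_1,\alpha_2$ in $M_1$, and $\alpha_0,\alpha_1,\beta_0,\beta_1,\rho$ in $M_2$, can be prescribed independently by a suitable choice of the deformation parameters $\lambda_{i,j},\gamma_{i,j}$ (the relations \eqref{peru} imposed by $M_1\equiv 0$ in case (ii) still leaving enough free parameters, in particular among the second-order coefficients entering $\alpha_0,\beta_0,\rho$). Consequently $M_1$ can be arranged to have three simple zeros in $\Sigma$ and $M_2$ four simple zeros; since simple zeros of the first non-vanishing Melnikov function give rise to the same number of hyperbolic limit cycles for $\epsilon$ small, the cyclicities $3$ and $4$ are attained, which together with the upper bounds completes the argument.
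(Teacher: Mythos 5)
Your proposal is correct and takes essentially the same route as the paper: reduce the cyclicity to counting zeros of $M_1$ (resp.\ $M_2$) in the forms (\ref{first}) and (\ref{mm}), then bound those counts by Petrov's argument-principle method in the cut plane ${\cal D}$, using the Picard--Fuchs relations and the Picard--Lefschetz description of the monodromy at $h=0$ --- which is exactly the content of Lemmas \ref{g1} and \ref{g3}, your Chebyshev-system phrasing being only a repackaging of those complex zero bounds. Your one genuine addition is the explicit realization (sharpness) step via independence of the coefficients $c_0,c_1,\alpha_1,\alpha_2$ and $\alpha_0,\alpha_1,\beta_0,\beta_1,\rho$, which the paper leaves implicit.
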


\subsubsection{The monodromy of Abelian integrals}
\label{monodromyint}
\vskip0.2cm
\begin{figure}
\begin{center}
 \def\svgwidth{7cm}
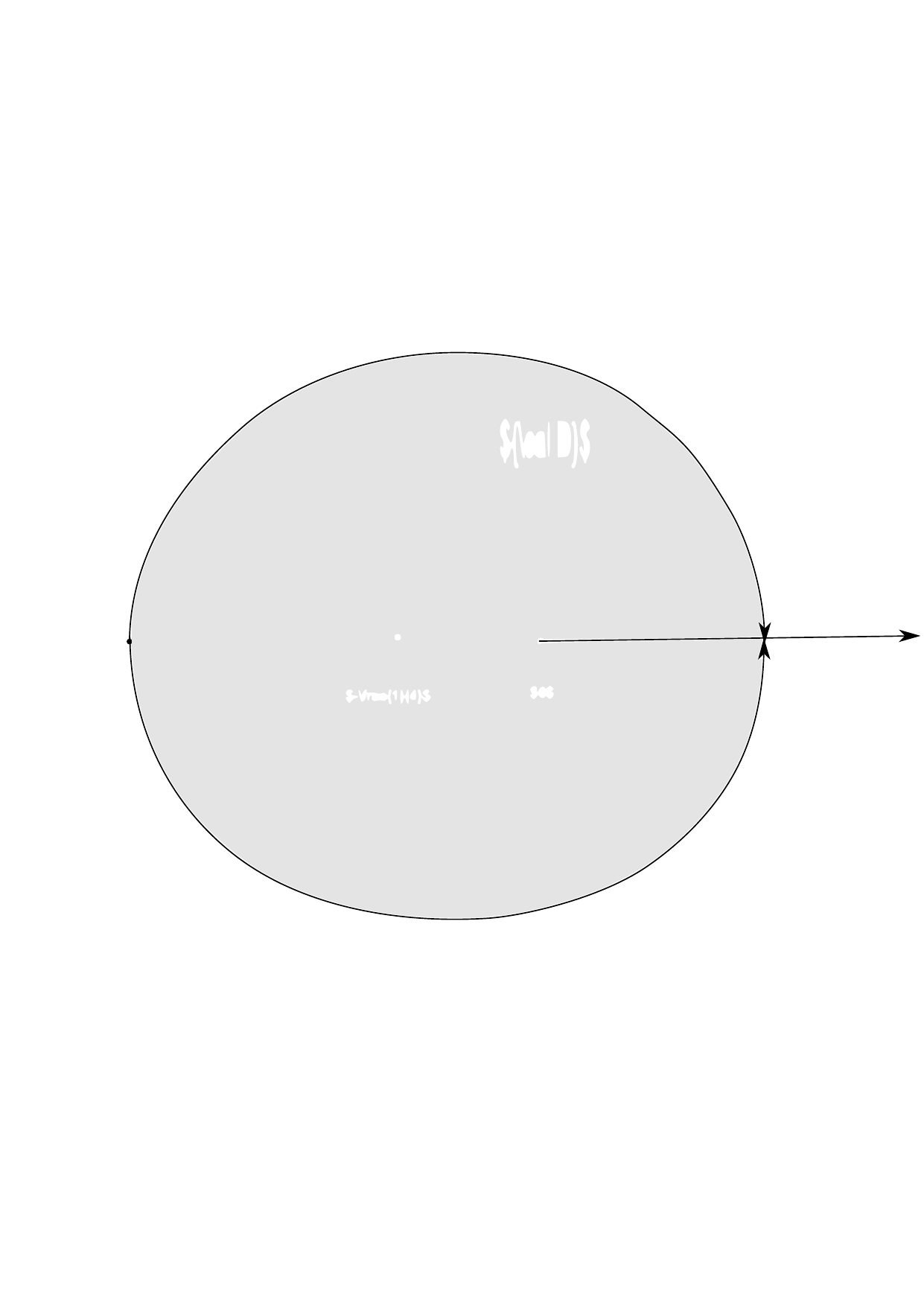
\end{center}
\caption{The analytic continuation of a cycle $\gamma(h)$ in the domain ${\cal D}= \C \setminus[0, +\infty)$}
\label{figint}
\end{figure}




\vskip0.2cm

%

\noindent An Abelian integrals $I(h)$ of the form (\ref{3}) is a multivalued analytic functions in $h\in \C$, single-valued  in the complex domain
$$
{\cal D}= \C \setminus [0,+\infty) .
$$
Moreover, along the segment $[0,+\infty)$ the integral $I(h)$ has a continuous limit when $h\in {\cal D}$ tends to a point $h_{0}\in [0,+\infty)$.  Namely, for $ h\in{\cal D}$, let $\{\gamma(h)\}_{h}$ be a continuous family of cycles, vanishing at the saddle point as $h$ tends to $h_{s}=0$.\\
The family $\{\gamma(h)\}_{h}$ has two analytic complex-conjugate continuations on $(-\infty,0)$, depending on the way in which the $h$ approaches this segment $[0,+\infty)$.
For $h\in (0,+\infty)$ denote $\gamma(h)=\gamma^+(h)$ the limit obtained when $Im (h)>0$. The cycle $\gamma^-(h)$ is defined in a similar way. It is important to note, the as $I(h)$ is real-analytic on $(-\infty,0)$, then 
$\gamma^-(h)=\overline{\gamma^+(h)}$ for $h\in (0,+\infty)$ (as follows also from the Schwarz reflection principle).
Finally,  the Picard-Lefschetz formula implies 
$$\gamma^+(h)= \gamma^-(h)+\delta_0(h) $$
where $\delta_0(h)$ is a continuous family of cycles vanishing at the saddle point as  $h\rightarrow0$.

\subsubsection{Zeros of the first return map in a complex domain}
\label{zerosint}
\begin{lem}
The first non-vanishing Poincar\'e-Pontryagin-Melnikov function (\ref{first})
has at most three zeros in the complex domain ${\cal D}$.
\label{g1}
\end{lem}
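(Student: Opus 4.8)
The plan is to count the zeros of $M_1$ in $\mathcal{D}=\C\setminus[0,+\infty)$ by the argument principle applied to $\partial\mathcal{D}$, following the Petrov method announced in the introduction. Since $M_1$ is single-valued and analytic on $\mathcal{D}$ (and, because the ovals degenerate to the centers $\delta_{\pm 1}$, analytic also at $h=-\tfrac14$), its number of zeros equals $\tfrac{1}{2\pi}$ times the increment of $\arg M_1$ along $\partial\mathcal{D}$, which consists of the two banks of the slit $[0,+\infty)$ together with a large circle $|h|=R$. I would first dispose of the contribution at infinity: from the Picard--Fuchs system \eqref{area} one reads off the leading growth $I_0\sim c\,h^{3/4}$, $I_2\sim c'\,h^{5/4}$, so that $M_1$ is asymptotically the single term $\sim c_1 h\,I_0$; the increment of its argument along the outer arc is thus bounded by an explicit constant, and the analysis reduces to the two banks of the cut.

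The heart of the argument is the Petrov trick on the slit. For $h\in(0,+\infty)$ the Picard--Lefschetz formula $\gamma^+(h)=\gamma^-(h)+\delta_0(h)$ together with $M_1^-=\overline{M_1^+}$ gives
\[
2i\,\mathrm{Im}\,M_1^+(h)=M_1^+(h)-M_1^-(h)=\oint_{\delta_0(h)}\bigl((c_0+c_1h)+\alpha_2 x^2\bigr)y\,dx=:\Phi(h),
\]
the polynomial term $\alpha_1 I_1=\alpha_1 c(4h-3)$ producing no jump. Between two consecutive zeros of $\mathrm{Im}\,M_1^+$ the argument of $M_1^+$ changes by at most $\pi$, so the variation of $\arg M_1^+$ along $(0,+\infty)$, and hence the total increment along the two banks, is controlled by the number of zeros of $\Phi$ on $(0,+\infty)$.

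It remains to bound the zeros of $\Phi(h)=(c_0+c_1h)J_0(h)+\alpha_2 J_2(h)$, where $J_i(h)=\oint_{\delta_0(h)}x^i y\,dx$. The periods $J_0,J_2$ of the vanishing cycle satisfy the same Picard--Fuchs system \eqref{area}; from it the ratio $w=J_2/J_0$ obeys a Riccati equation whose analysis should show that $w$ is monotone (its derivative keeping a constant sign) on $(0,+\infty)$. Writing $\Phi/J_0=(c_0+c_1h)+\alpha_2 w(h)$, the derivative $c_1+\alpha_2 w'(h)$ then changes sign at most once, so $\Phi$ has at most two zeros on $(0,+\infty)$; combined with the $\pi$-variation estimate and the bounded contribution at infinity, this yields the bound of three. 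The main obstacle is precisely this monotonicity statement: one must extract from the Riccati equation attached to \eqref{area} that $w$ does not oscillate, using the boundary behaviour at $h=0^+$, where Lemma \ref{14} furnishes (as the coefficients of $\ln h$) the expansions $J_0\sim -h$ and $J_2\sim\tfrac12 h^2$, hence $w\to 0$, as the anchor for the sign analysis.
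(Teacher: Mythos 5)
Your overall strategy coincides with the paper's: apply the argument principle to ${\cal D}_R={\cal D}\cap\{|h|\le R\}$, estimate the rotation along the big circle from the growth $M_1\sim h^{7/4}$, and on the cut use Picard--Lefschetz plus Schwarz reflection to write $2i\,\mathrm{Im}\,M_1^+(h)=\oint_{\delta_0(h)}\bigl(\alpha_0(h)+\alpha_2x^2\bigr)y\,dx$, the $I_1$-term dropping out because it is a real polynomial. Up to this point you are reproducing the paper's proof. The gap is in the one step that carries the actual difficulty: bounding the number of zeros of this jump on $(0,+\infty)$. The paper invokes Lemmas 7 and 8 of Petrov \cite{Pet87}, which give that $\mathrm{Im}\,M_1$ vanishes at most \emph{once} on the cut; with one zero, each bank contributes at most $2\pi$ of argument variation, so the total is at most $\tfrac{7\pi}{2}+4\pi=\tfrac{15\pi}{2}<8\pi$, hence at most three zeros. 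You replace this citation by a Riccati/monotonicity argument for $w=J_2/J_0$ which you explicitly do not carry out (you call it ``the main obstacle''), so the proof is incomplete exactly where completeness was needed.

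Moreover, the sketch as written would not suffice even if the monotonicity of $w$ were established. First, the inference ``$w'$ keeps a constant sign, hence $c_1+\alpha_2w'$ changes sign at most once'' is a non sequitur: a function of constant sign can cross the level $-c_1/\alpha_2$ many times; you would need monotonicity of $w'$ itself (convexity of $w$), not of $w$. Second, and independently, your conclusion ``$\Phi$ has at most two zeros on $(0,+\infty)$'' is arithmetically too weak for the lemma: with two zeros of $\mathrm{Im}\,M_1^+$ on the cut, each bank contributes up to $3\pi$, and the total variation is bounded by $\tfrac{7\pi}{2}+6\pi=\tfrac{19\pi}{2}$, which yields at most \emph{four} zeros, not three. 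To recover the stated bound you must show that $\oint_{\delta_0(h)}\bigl(\alpha_0(h)+\alpha_2x^2\bigr)y\,dx$ vanishes at most once on the cut --- which is precisely the content of the Petrov lemmas that the paper cites.
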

\begin{lem}
The second Poincar\'e-Pontryagin-Melnikov function (\ref{mm}) of the  first return map  has at most four zeros in the complex domain ${\cal D}$.
\label{g3}
\end{lem}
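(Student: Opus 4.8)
The plan is to prove Lemma~\ref{g3} by the Petrov method, i.e.\ the argument principle applied to $M_{2}$ on the domain ${\cal D}=\C\setminus[0,+\infty)$, following the same scheme as Lemma~\ref{g1} but in a space of one higher dimension. The first step is to simplify (\ref{mm}). Since in the interior case $I_{1}=c(4h-3)$ is a polynomial of degree one, the middle term $(\beta_{0}+4h\beta_{1})I_{1}$ is a polynomial $Q(h)$ of degree at most two, so that
$$M_{2}(h)=A(h)\,I_{0}(h)+\rho\,I_{2}(h)+Q(h),\qquad A(h)=\alpha_{0}+4\alpha_{1}h,\quad \deg Q\le 2.$$
Because $A$ and $Q$ are polynomials, $M_{2}$ is single-valued and analytic on ${\cal D}$; it belongs to the six-dimensional space spanned by $I_{0},\,hI_{0},\,I_{2},\,1,\,h,\,h^{2}$, and the bound ``four'' should be read as this dimension diminished by the two degenerations at the endpoints $h=0$ and $h=\infty$. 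By the argument principle, the number $N$ of zeros of $M_{2}$ in ${\cal D}$ equals $\frac{1}{2\pi}$ times the increment of $\arg M_{2}$ along $\partial{\cal D}$, taken as the two banks of the cut $[0,+\infty)$ joined by a large circle $|h|=R$ and a small circle around $h=0$.

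The core of the argument is the evaluation of $M_{2}$ on the cut. Since $M_{2}$ is real-analytic on $(-\infty,0)\subset{\cal D}$, Schwarz reflection gives $M_{2}^{-}(h)=\overline{M_{2}^{+}(h)}$, and the Picard--Lefschetz formula $\gamma^{+}(h)=\gamma^{-}(h)+\delta_{0}(h)$ yields, for $h\in(0,+\infty)$,
$$2i\,\mathrm{Im}\,M_{2}^{+}(h)=M_{2}^{+}(h)-M_{2}^{-}(h)=A(h)\,J_{0}(h)+\rho\,J_{2}(h),\qquad J_{i}(h)=\oint_{\delta_{0}(h)}x^{i}y\,dx,$$
the polynomial $Q$ being single-valued and dropping out of the difference. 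By Lemma~\ref{14} the vanishing-cycle integrals $J_{i}=I_{i}^{+}-I_{i}^{-}$ equal $2\pi i$ times the coefficients of $\ln h$, hence are analytic at $h=0$ with
$$J_{0}(h)=2\pi i\big(-h+\tfrac{3}{8}h^{2}-\cdots\big),\qquad J_{2}(h)=2\pi i\big(\tfrac{1}{2}h^{2}-\cdots\big),$$
and satisfy the same Picard--Fuchs system as $I_{0},I_{2}$. In particular $\mathrm{Im}\,M_{2}^{+}$ is, up to the factor $\pi$, the real function $\Phi(h)=\tfrac{1}{2\pi i}\big(A(h)J_{0}(h)+\rho J_{2}(h)\big)$.

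Finally I would run the counting. The increment of $\arg M_{2}$ along each bank of the cut is at most $\pi$ times one plus the number of sign changes of $\mathrm{Im}\,M_{2}^{+}$, i.e.\ of the zeros of $\Phi$ on $(0,+\infty)$; writing $\Phi=J_{0}\big(A+\rho\,J_{2}/J_{0}\big)$ and proving, from the Picard--Fuchs system, that the ratio $J_{2}/J_{0}$ is monotone reduces this to counting the intersections of a monotone curve with the graph of the affine function $-A/\rho$. The increment along the large circle is governed by the leading asymptotics $I_{0}\sim h^{3/4}$, $I_{2}\sim h^{5/4}$, $Q\sim h^{2}$ at infinity, and the small circle at $h=0$ contributes nothing because $M_{2}$ has there a finite limit by Lemma~\ref{14}. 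Assembling these contributions gives $N\le4$. The main obstacle is exactly this last bookkeeping: establishing the monotonicity of $J_{2}/J_{0}$ on the cut (a Wronskian or Riccati computation with the Picard--Fuchs system), and then combining the half-integer argument increments at the endpoints with the \emph{fractional} winding produced at infinity by the non-integer exponents $3/4$ and $5/4$, so that the separate pieces sum to exactly four rather than to an over-count.
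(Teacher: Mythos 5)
Your overall scheme is the same as the paper's: absorb $(\beta_0+4h\beta_1)I_1$ into a polynomial using that $I_1$ is affine in $h$ (the paper's proof uses $I_1=c(4h+1)$, the normalization forced by $I_1$ vanishing at the center value $h=-1/4$; the $(4h-3)$ you quote is a slip elsewhere in the paper and immaterial, since only $\deg I_1\le 1$ is used), then run the argument principle on ${\cal D}\cap\{|h|\le R\}$, with the big circle contributing an argument increment close to $4\pi$ because $M_2\sim h^2$, and with the cut controlled by the zeros of $\mathrm{Im}\,M_2^+$, which by Schwarz reflection and Picard--Lefschetz equals (up to a constant factor) $\Phi(h)=A(h)J_0(h)+\rho J_2(h)$, where $J_i(h)=\oint_{\delta_0(h)}x^iy\,dx$ and the polynomial part drops out. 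All of this matches the paper's proof.

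The genuine gap is the one step that actually produces the number four: you must show that $\Phi$ has at most \emph{one} zero on $(0,+\infty)$. With $k$ zeros, each bank of the cut splits into $k+1$ intervals on each of which the argument of $M_2$ varies by at most $\pi$, so the bookkeeping gives $2\pi N\le 4\pi+2(k+1)\pi$, i.e. $N\le 3+k$; the bound four requires $k\le 1$. The paper obtains $k\le 1$ by citing Lemmas 7 and 8 of Petrov \cite{Pet87}, exactly as in the proof of Lemma \ref{g1}. Your substitute --- prove that $J_2/J_0$ is monotone and then ``count intersections of a monotone curve with the graph of the affine function $-A/\rho$'' --- is not only left unproved (you flag it yourself as the main obstacle) but is logically insufficient even if the monotonicity were established: a monotone function can cross a non-horizontal line arbitrarily many times (e.g. $h+\tfrac12\sin h$ is strictly increasing and meets $y=h$ infinitely often), so monotonicity of $J_2/J_0$ settles only the degenerate case $\alpha_1=0$, where $-A/\rho$ is constant. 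To close the argument along your lines you would need a strictly stronger property, namely that $\mathrm{span}\{J_0,hJ_0,J_2\}$ is a Chebyshev system on $(0,+\infty)$ --- which one could try to extract from the Riccati equation for $J_2/J_0$ induced by the Picard--Fuchs system (\ref{area}) together with the endpoint asymptotics of Lemma \ref{14} --- or simply invoke Petrov's lemmas as the paper does. As it stands, the proposal reproduces the paper's frame but omits its key input, and the count of four does not follow.
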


\begin{proof}[Proof of {Lemma} \ref{g1}] It follows from theorem of Petrov \cite{Pet87}. We sketch the proof:\\
\noindent We denote
$$
M_1(h) = \alpha_{0}(h)I_{0}(h)+\alpha_{1}I_{1}(h)+\alpha_{2}I_{2}(h)=\oint_{\gamma(h)}\omega=I_{\omega}(h),  h\in \cal D
$$
\noindent The monodormy of $I_{1}$ is $I_{1}$ on the ray $\{0<h\}$ (because of symmetry). Then $I_{1}(h)= a+bh=c(4h+1)$, where  $c\in\R $.
Indeed, $I_1(h)$ is univalued, of moderate growth, has no poles, vanishes at $h=-1/4$, and grows no faster that $h$ as $h$ tends to infinity. It follows that
$$M_1(h) = \alpha_{0}(h)I_{0}(h)+\alpha_{2}I_{2}(h)+c(4h+1). $$
\noindent We shall use the argument principle for analytic functions in the domain 
$${\cal D}_R= {\cal D}\cap \{ |h| \leq R \}$$
 as follows. Consider a contour encircling   ${\cal D}_R$.The number of zeros of the integral $M_1(h)$ in this domain is the number of rotations of the curve described by $M_1(h)$ about the origin as $h$ describes the contour.\\
\noindent  $\bullet$ As $h$  describes the circle $\{|h|=R\}$, for some fixed sufficiently big $R>0$, the integral $M_1(h) $ behaves as $ h^{\frac{7}{4}}$. Thus the increase of the argument of
$M_1(h)$ is close to $\frac{7\pi}{2} < 4 \pi$. \\
\noindent \\  $\bullet$ Along the   cut $[0,\R]$, the number of zeros of $M_1(h)$ about the origin is bounded by the number of zeros of the imaginary part of $M_1$, and
$$Im \,M_{1}(h)= \int_{\delta_{0}(h)}\omega, \mbox{  where  } \delta_{0}=\gamma^{+}-\gamma^{-}.$$
Therefore
$$Im \,M_{1}(h) = \alpha_{0}(h)\oint_{\delta_0(h)}y dx
 +\alpha_{2}\oint_{\delta_0(h)} x^2y dx, \;\; h\in [0,R]
 $$
 and by lemmas $7$ and $8$ of Petrov\cite{Pet87} cannot exceed 1.
  We conclude that the total increase of the argument of $M_1$ along the border of ${\cal D}_R$ can not exceed
three, which  proves  Lemma \ref{g1}.
 \end{proof}


\begin{proof}[Proof of Lemma \ref{g3}]
\noindent We denote
$$
M_{2}(h)=(\alpha_{0}+4\alpha_{1}h)I_{0}+(\beta_{0}+4h\beta_{1})I_{1}+\rho I_{2}=\oint_{\gamma(h)}w=I_{w}(h), h\in \cal D,
$$
\noindent where $\alpha_{i}$, $\beta_{i}$ and $\rho$  are defined in (\ref{mm}).\\ \noindent By making use the expression of $I_{1}=c(4h+1)$
Then\\ $$M_2(h) = \mu(h)+\alpha_{0}(h)I_{0}(h)+\alpha_{2}I_{2}(h)+\rho I_{2}$$ \\  \noindent where \\ $$\alpha_{0}(h)=\alpha_{0}+4\alpha_{1}h$$
\begin{eqnarray*}
\mu(h)&=&16ch^{2}(-\frac{1}{2}\lambda_{7,1}(\lambda_{3,1}+2\gamma_{5,1})+3\lambda_{7,1}(\lambda_{6,1}
+  \gamma_{7,1})) \\& &
+h[-12c(-\frac{1}{2}\lambda_{7,1}(\lambda_{3,1}+2\gamma_{5,1})+3\lambda_{7,1}(\lambda_{6,1}+\gamma_{7,1}))+4\beta_{0}]-3\beta_{0}
\end{eqnarray*}
and apply, as in the proof of Lemma \ref{g1}, the argument principle to $M_{2}$.
\noindent The number of zeros of the integral in this domain is the number of rotations of the curve described by $M_2(h)$ about the origin as $h$ describes the border of ${\cal D}_R$.\\ 
\noindent  $\bullet$ As $h$  describes the circle $\{|h|=R\}$; the integral $M_2(h)$ behaves as $h^{2}$ and  the increase of the argument of
$M_2(h)$ is close to $4\pi$. \\ \\
\noindent  $\bullet$ Along the   cut $(0,R]$, the number of zeros of $M_2(h)$ about the origin is bounded by the number of zeros of the imaginary part of $  M_{2}(h)$  and 
$$Im \,M_{2}(h) = \alpha_{0}(h)\oint_{\delta_0(h)}y dx
 +(\alpha_{2}+\rho) \oint_{\delta_0(h)} x^2y dx, \;\; h\in [0,R] .
 $$
Lemmas $7$ and $8$ of Petrov\cite{Pet87} imply that the number of the zeros of $Im \,M_{2}(h) $ cannot exceed $ 1$.\\
Consequently, the total number of circuits cannot exceed four, which implies Lemma \ref{g3} and hence Theorem \ref{cyclicityint}.
 \end{proof}

\vskip0.2cm

\subsection{The exterior eight-loop case}
\label{ext}

\noindent In this section we consider the exterior eight-loop case, with period annulus as shown in fig.\ref{fig1}. Let
${\gamma(h)}_{h}$ be the continuous family of exterior ovals of the non-perturbed system defined on the maximal
open interval $\Sigma=(0, +\infty)$, where $$\gamma(h)~\subset\{H=h\}$$.
\begin{thm}
\label{cyclicityext}
The maximal cyclicity of the exterior period annulus $\{(x,y)\in \R^2: H(x,y)>0 \}$ of $dH=0$ with respect to   one-parameter analytic deformation (\ref{f}) is
\begin{description}
\item[(i) ]  two, if $M_1\neq 0$.
\item[(ii)] four, if $M_1=0$ but $M_2\neq 0$.
\end{description}
\end{thm}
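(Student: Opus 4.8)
The plan is to deduce Theorem \ref{cyclicityext} from two zero-counting lemmas for the exterior Melnikov functions, exactly as Theorem \ref{cyclicityint} was deduced from Lemmas \ref{g1} and \ref{g3} in the interior case. Recall that the cyclicity of the period annulus with respect to the deformation (\ref{f}) is bounded by the number of isolated zeros, on the interval of existence $\Sigma=(0,+\infty)$ of the ovals, of the first Melnikov function that does not vanish identically. Thus part (i) reduces to bounding the zeros of $M_1$ in the form (\ref{firstext}), and part (ii) reduces to bounding the zeros of $M_2$ in the form (\ref{second}), under the hypothesis $M_1\equiv 0$ encoded by (\ref{per}).

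For $M_1$ the symmetry $(x,y)\mapsto(-x,y)$ forces $I_1\equiv 0$, so $M_1=\alpha_0(h)I_0+\alpha_2 I_2$ is a linear combination of the three functions $I_0,\,hI_0,\,I_2$. For $M_2$ I first observe that on $(0,+\infty)$ the factor $(4h+1)^{-1}$ is holomorphic and non-vanishing; hence the zeros of $M_2$ coincide with those of the bracket $(\alpha_0+4\alpha_1 h+\alpha_2 h^2)I_0+(\beta_0+4\beta_1 h)I_2$, which is a combination of the five functions $I_0,\,hI_0,\,h^2I_0,\,I_2,\,hI_2$. In both cases the Picard--Fuchs system (\ref{area}) lets me express every derivative $I_i'$ and every higher integral through $I_0,I_2$ and rational functions of $h$; this is what keeps the relevant function spaces finite-dimensional, and the expected bounds $2=3-1$ and $4=5-1$ are the Chebyshev counts of these spaces.

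I then run the Petrov argument-principle scheme in the complex domain ${\cal D}'=\C\setminus(-\infty,0]$, on which the exterior integrals $I_0,I_2$ are single-valued, working on ${\cal D}'_R={\cal D}'\cap\{|h|\le R\}$ for $R$ large. Two estimates are needed. First, on the circle $\{|h|=R\}$ the growth $I_0\sim h^{3/4}$, $I_2\sim h^{5/4}$ gives $M_1\sim h^{7/4}$ and $M_2\sim h^{7/4}$, so the increment of the argument along the circle is close to $\frac{7\pi}{2}$. Second, along the two banks of the cut the number of zeros of $M_k$ about the origin is controlled by the number of zeros of its imaginary part, which by the Picard--Lefschetz formula equals an integral over the vanishing cycle $\delta(h)=\gamma^+(h)-\gamma^-(h)$; writing $\mathrm{Im}\,M_k$ as a polynomial-coefficient combination of $\oint_{\delta}y\,dx$ and $\oint_{\delta}x^2y\,dx$, these zeros are bounded by Lemmas $7$ and $8$ of Petrov \cite{Pet87}. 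For $M_1$ the coefficients are affine and the count on the cut is minimal, whereas for $M_2$ the coefficient of the $I_0$-jump is quadratic, which is precisely what produces the extra zeros; combining circle and cut contributions yields at most two zeros for $M_1$ (Lemma \ref{z1}) and at most four for $M_2$ (Lemma \ref{z3}), the asserted cyclicities.

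The main obstacle is the monodromy bookkeeping on the cut. Unlike the interior domain ${\cal D}=\C\setminus[0,+\infty)$, where a single saddle governs the vanishing cycle, here the cut $(-\infty,0]$ carries both the saddle value $h=0$ and the center value $h=-\frac14$, so the continuation of $\gamma(h)$ and the resulting jump $\delta(h)$ must be tracked across two critical values. Correctly identifying $\delta(h)$ and showing that $\mathrm{Im}\,M_k$ has only the few sign changes demanded by the bounds $2$ and $4$ is the delicate point; once this count is pinned down and combined with the circle estimate, the argument principle closes the proof and hence establishes Theorem \ref{cyclicityext}.
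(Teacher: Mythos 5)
Your reduction of the theorem to zero-counting lemmas for $M_1$ and $M_2$ in a complex domain, and your circle estimates, follow the general Petrov scheme of the paper; but the core of your argument has a genuine gap, and it is exactly the point you yourself flag as ``delicate'' and then leave unresolved: the count of zeros of $\mathrm{Im}\,M_k$ along the cut $(-\infty,0]$. You propose to write $\mathrm{Im}\,M_k$ as a polynomial-coefficient combination of $\oint_{\delta}y\,dx$ and $\oint_{\delta}x^2y\,dx$ over the jump cycle $\delta(h)=\gamma^+(h)-\gamma^-(h)$ and to bound its zeros by Lemmas 7 and 8 of Petrov, as was done in the interior case (Lemmas \ref{g1}, \ref{g3}). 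This does not carry over: by the Picard--Lefschetz relations (\ref{gplus})--(\ref{gminus2}), the jump cycle equals $2\delta_0^+(h)$ only on $(-1/4,0)$, while on $(-\infty,-1/4)$ it is $2\delta_0^+(h)+\delta_1^+(h)+\delta_{-1}^+(h)$; moreover for $h<-1/4$ the level curve has no real oval, so these are integrals over genuinely complex cycles. Petrov's lemmas, as used in the interior proof, concern the single saddle vanishing cycle and give you nothing on that portion of the cut, and your proposal contains no substitute argument. Since the asserted bounds ($2$ for $M_1$, $4$ for $M_2$) hinge entirely on this count, the sentence ``once this count is pinned down \dots the argument principle closes the proof'' is precisely the statement that remains to be proved.

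The paper resolves this difficulty by a different device, which is worth comparing with your plan. It first proves (Lemma \ref{I0}, itself by an argument-principle and Wronskian argument) that $I_0$ does not vanish in ${\cal D}$, and then applies the argument principle not to $M_k$ itself but to the ratios $F=M_1/I_0$ and $F=(4h+1)M_2/I_0$. The gain is that the jump of such a ratio across the cut is
$$
F^+(h)-F^-(h)=\mu(h)\,\frac{W(h)}{|I_0(h)|^2},\qquad
W(h)=\det\begin{pmatrix}\oint_{\gamma^+}x^2y\,dx & \oint_{\gamma^+}y\,dx\\[4pt] \oint_{\gamma^-}x^2y\,dx & \oint_{\gamma^-}y\,dx\end{pmatrix},
$$
with $\mu$ a polynomial coefficient, and the Wronskian $W$ is single-valued (the monodromy acts on both rows simultaneously), hence rational, and by the Picard--Fuchs system (\ref{area}) equals $c\,h(4h+1)$, $c\neq0$. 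This gives the exact counts on the cut, uniformly across its two segments and both critical values: one zero (at $-1/4$) for $M_1$, two zeros (at $-1/4$ and at the root of $\mu$) for $(4h+1)M_2/I_0$, whence the bounds two and four in Lemmas \ref{z1} and \ref{z3}. Your direct approach, without the division by $I_0$ (and its prerequisite Lemma \ref{I0}) or some other control of the integrals over $2\delta_0^++\delta_1^++\delta_{-1}^+$, cannot deliver these counts, and therefore does not establish the theorem.
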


\begin{rmk}
\label{rem1}
The above Theorem claims that from any compact, contained in the open exterior period annulus $\{(x,y)\in \R^2: H(x,y)>0 \}$, bifurcate at most four limit cycles (if $M_2\neq 0$). It says nothing about the limit cycles bifurcating from the separatrix eight-loop or from infinity (i.e. the equator of the Poincar\'e sphere).
\end{rmk}

\subsubsection{The monodromy of Abelian integrals}
\label{monodromyext}
\begin{figure}
\begin{center}
 \def\svgwidth{9cm}
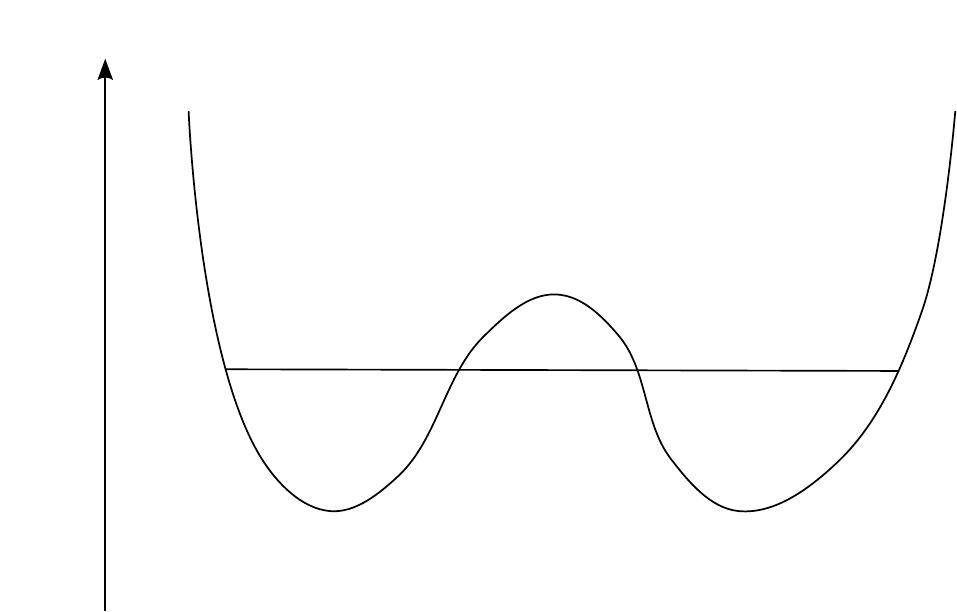
\end{center}
\caption{The vanishing cycles  $\delta_0(h), \delta_1(h), \delta_{-1}(h)$ for $-\frac{1}{4}<h<0$}
\label{figdelta}
\end{figure}
The Abelian integrals $I(h)$ of the form (\ref{3}) are multivalued functions in $h\in \C$ which become single-valued analytic functions in the complex domain
$$
{\cal D}= \C \setminus [0,-\infty) .
$$
Along the segment $[0,-\infty)$ the integrals have a continuous limit when $h\in {\cal D}$ tends to a point $h_0\in [0,-\infty)$, depending on the sign of the imaginary part of $h$. Namely, if $Im (h)>0$ we denote the corresponding limit by $I^+(h)$, and when $Im (h)>0$ by $I^-(h_0)$. We use a similar notation for the continuous limits of loops $\gamma(h)$ when $h$ tends to the segment $[0,-\infty)$. We have therefore
$$
I^\pm(h)= \int_{\gamma^\pm(h)} \omega
$$
where $\omega$ is a polynomial one-form. The monodromy $I^+(h)-I^-(h)$, $ h\in [0,-\infty)$ depends therefore on the monodromy of $\gamma(h)$ which is expressed by the Picard-Lefshetz formula. Namely, for $h\in {\cal D}$, define the continuous families of closed loops
$$\delta_0(h), \delta_1(h), \delta_{-1}(h)
$$
which vanish at the singular points $(0,0), (0,1), (0,-1)$ when $h$ tends to $0$ or $-1/4$ respectively, and in such a way that
$Im (h) > 0$, see fig.\ref{figdelta}. This defines uniquely  the homology classes of the loops, up to an orientation. From now on we suppose that the loop $\gamma(h)$ for $h>0$ is oriented by the vector field $X_0$, and that the orientation of $\delta_0(h), \delta_1(h), \delta_{-1}(h)$ are chosen in such a way that
$$
\gamma(h) = \delta_0(h) + \delta_1(h)  + \delta_{-1}(h) , \, h \in \cal D .
$$
According to the definition of the vanishing cycles
\begin{equation}
\gamma^+(h) = \delta^+_0(h) + \delta^+_1(h) + \delta^+_{-1}(h) , h\in (-\infty,0] .
\label{gplus}
\end{equation}
and the Picard-Lefschetz formula implies
\begin{equation}
\gamma^-(h)= - \delta^+_0(h) + \delta^+_1(h) + \delta^+_{-1}(h) , h\in [-1/4,0]
\label{gminus1}
\end{equation}
and
\begin{equation}
\gamma^-(h)= - \delta^+_0(h), h\in (-\infty, -1/4]
\label{gminus2}
\end{equation}
For a further use we note that
\begin{equation}
\delta^-_0(h)= \delta^+_0(h), h\in ( -1/4,+\infty)
\label{d0}
\end{equation}
\begin{equation}
\delta^-_1(h)= \delta^+_1(h),
\delta^-_{-1}(h)= \delta^+_{-1}(h),
h\in ( -\infty, 0)
\label{d1}
\end{equation}

\begin{figure}
\begin{center}
 \def\svgwidth{7cm}
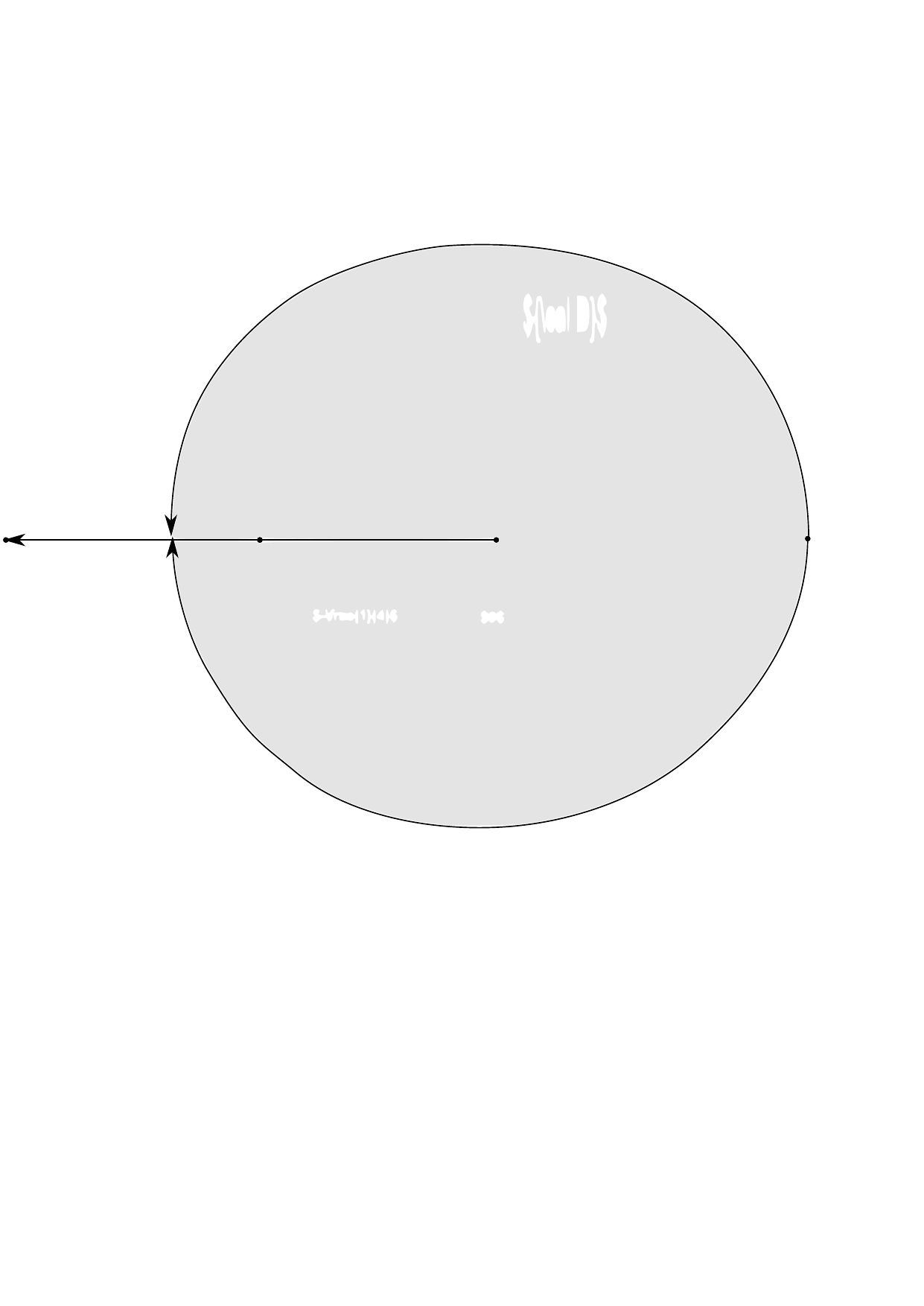
\end{center}
\caption{The analytic continuation of a cycle $\gamma(h)$ in the domain ${\cal D}= \C \setminus [0,-\infty)$}
\label{figgamma}
\end{figure}



\begin{lem}
The first non-vanishing Poincar\'e-Pontryagin-Melnikov function (\ref{first})
has at most two zeros in the complex domain ${\cal D}$.
\label{z1}
\end{lem}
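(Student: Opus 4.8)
The plan is to follow the same Petrov-type argument-principle scheme used in the proof of Lemma \ref{g1}, adapted to the exterior domain $\mathcal D=\C\setminus(-\infty,0]$ and to the exterior monodromy recorded in \eqref{gplus}--\eqref{d1}. First I would note that in the exterior case the symmetry $(x,y)\mapsto(-x,y)$ forces $I_1\equiv0$, so by \eqref{firstext} the first Melnikov function reads
$$
M_1(h)=\alpha_0(h)I_0(h)+\alpha_2 I_2(h)=\oint_{\gamma(h)}\omega=I_\omega(h),\qquad h\in\mathcal D,
$$
with $\alpha_0(h)=c_0+c_1h$ affine, $\alpha_2=c_2$ constant and $\omega=(\alpha_0(h)+\alpha_2 x^2)\,y\,dx$ a one-form with real coefficients. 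Because of this reality, $M_1$ is real-analytic on the real axis, so the Schwarz reflection principle gives $M_1^-(h)=\overline{M_1^+(h)}$ along the cut. I would then apply the argument principle on $\mathcal D_R=\mathcal D\cap\{|h|\le R\}$: the number of zeros of $M_1$ in $\mathcal D_R$ equals the number of rotations of $M_1$ about the origin as $h$ traverses $\partial\mathcal D_R$, that is, the large arc $\{|h|=R\}$ together with the two banks of $(-\infty,0]$.

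For the arc, I would use the rescaling $x\sim h^{1/4}$, $y\sim h^{1/2}$ valid on the growing exterior oval, which gives $I_0\sim h^{3/4}$ and $I_2\sim h^{5/4}$, hence $M_1(h)\sim\mathrm{const}\cdot h^{7/4}$ as $h\to\infty$. Consequently the increase of $\arg M_1$ along the almost-complete circle $\{|h|=R\}$ tends to $\tfrac{7\pi}{2}<4\pi$, i.e.\ strictly fewer than two full turns.

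For the two banks I would use $M_1^-=\overline{M_1^+}$, so the two contributions are complex conjugate and the winding is governed by the sign changes of $\mathrm{Im}\,M_1^+$ along the cut. By the Picard--Lefschetz formulas \eqref{gplus}--\eqref{gminus2},
$$
\mathrm{Im}\,M_1^+(h)=\frac{1}{2i}\oint_{\gamma^+(h)-\gamma^-(h)}\omega=
\begin{cases}
\dfrac{1}{2i}\oint_{2\delta_0^+(h)}\omega, & h\in[-\tfrac14,0],\\[6pt]
\dfrac{1}{2i}\oint_{2\delta_0^+(h)+\delta_1^+(h)+\delta_{-1}^+(h)}\omega, & h\in(-\infty,-\tfrac14],
\end{cases}
$$
and I would bound the number of sign changes of each vanishing-cycle integral by Lemmas $7$ and $8$ of Petrov \cite{Pet87}, using the symmetry $x\mapsto-x$ (which identifies $\oint_{\delta_1}\omega=\oint_{\delta_{-1}}\omega$ on even forms) to collapse the second line. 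Assembling the two contributions, the arc adds close to $\tfrac{7\pi}{2}$ to the total variation of $\arg M_1$ while the cut adds strictly less than one extra turn, so the winding number, being an integer, is at most $2$. Morally, the drop of one unit relative to the interior bound of Lemma \ref{g1} reflects the vanishing of $I_1$, i.e.\ that $M_1$ now lies in the three-dimensional space $\langle I_0,\,hI_0,\,I_2\rangle$ instead of a four-dimensional one.

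The main obstacle I anticipate is the cut analysis at the center value $h=-\tfrac14$. Unlike the interior case, the cut $(-\infty,0]$ carries two branch points ($0$ and $-\tfrac14$), so $\mathrm{Im}\,M_1$ changes its monodromy representation there, and one must check that the orientations of $\delta_0,\delta_1,\delta_{-1}$ fixed in \eqref{gplus}--\eqref{d1} are compatible with the $x\mapsto-x$ symmetry and that no additional sign change of $\mathrm{Im}\,M_1$ is produced as $h$ crosses $-\tfrac14$. Pinning down the exact arc/cut bookkeeping, so that the total is precisely $2$ and not $3$, is the delicate point; the remainder is a routine transcription of the proof of Lemma \ref{g1}.
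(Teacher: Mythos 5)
Your general scheme (argument principle on $\mathcal D_R={\cal D}\cap\{|h|\le R\}$, a growth estimate on the circle, control of $\mathrm{Im}$ along the cut) is the right family of ideas, but it is not the paper's argument, and as written it does not close. The paper does \emph{not} rerun the Lemma \ref{g1} template directly on $M_1$: it first divides by $I_0$, setting $F=M_1/I_0=\alpha_0(h)+\alpha_2\,I_2(h)/I_0(h)$, which is legitimate because Lemma \ref{I0} shows that $I_0$ has no zeros in ${\cal D}$. This division is the essential idea missing from your proposal, and it buys two things at once. First, along the cut the polynomial coefficient $\alpha_0(h)$ is real, so it disappears from the imaginary part: $2\sqrt{-1}\,\mathrm{Im}\,F=F^+-F^-=\alpha_2 W(h)/|I_0(h)|^2$, where $W(h)$ is the $2\times2$ determinant of the periods of $x^2y\,dx$ and $y\,dx$ over $\gamma^\pm$. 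By the monodromy relations of subsection \ref{monodromyext} this determinant has single-valued determinations, hence is rational, and the Picard--Fuchs system \eqref{area} gives $W(h)=c\,h(4h+1)$; so $\mathrm{Im}\,F$ vanishes at exactly one interior point of the cut, $h=-1/4$, \emph{independently of the unknown coefficients} $c_0,c_1,c_2$, and no appeal to Petrov's Lemmas 7--8 is needed there. Second, the division lowers the growth at infinity (from $h^{7/4}$ for $M_1$ to at most $h$ for $F$), so the circle contributes correspondingly less winding. In your version the cut analysis concerns $\mathrm{Im}\,M_1^+=\alpha_0(h)\,\mathrm{Im}\,I_0^++\alpha_2\,\mathrm{Im}\,I_2^+$, in which the unknown degree-one coefficient $\alpha_0(h)$ survives; bounding its sign changes is a Chebyshev-type problem essentially as hard as the lemma itself, and Petrov's Lemmas 7--8 are not stated for the cycles $2\delta_0^++\delta_1^++\delta_{-1}^+$ that appear on $(-\infty,-1/4]$. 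This is exactly the point you yourself flag as ``delicate'' and leave open.

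There is also a quantitative failure in the bookkeeping. Even granting that $\mathrm{Im}\,M_1^+$ has at most one zero on the open cut, the standard Petrov estimate allows $(N+1)\pi$ of argument variation per bank, i.e.\ up to $4\pi$ from the two banks; together with the $7\pi/2$ from the circle this gives a total at most $15\pi/2$, hence a winding number at most $3$, not $2$. Your assertion that ``the cut adds strictly less than one extra turn'' would require $\mathrm{Im}\,M_1^+$ to have \emph{no} interior zero on the cut, which is false in general: taking $\alpha_0$ constant, the combination $c_0\,\mathrm{Im}\,I_0^++c_2\,\mathrm{Im}\,I_2^+$ can be made to vanish at any prescribed interior point of the cut by a suitable choice of $c_0/c_2$. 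So the constant $2$ does not follow from your steps; it does follow in the paper's setup, precisely because after division by $I_0$ the imaginary part along the cut is the explicit, coefficient-free function $\alpha_2\,c\,h(4h+1)/|I_0|^2$ and the growth on the circle is reduced.
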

\begin{lem}
The second Poincar\'e-Pontryagin-Melnikov function (\ref{second}) of the  first return map  has at most four zeros in the complex domain ${\cal D}$.
\label{z3}
\end{lem}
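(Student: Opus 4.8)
The plan is to reproduce, in the exterior domain $\mathcal{D}=\C\setminus[0,-\infty)$, the Petrov argument-principle scheme already used for Lemma \ref{g3}, the main new feature being the non-uniform Picard--Lefschetz monodromy along the slit. Since the factor $(4h+1)^{-1}$ in (\ref{second}) is analytic and nowhere vanishing on $\mathcal{D}$ (its only singularity $h=-1/4$ sits on the removed ray), the zeros of $M_2$ in $\mathcal{D}$ are exactly the zeros of the bracket $N(h)=(\alpha_0+4\alpha_1 h+\alpha_2 h^2)I_0(h)+(\beta_0+4\beta_1 h)I_2(h)$, so I would count the zeros of $N$. As the coefficients are real and $I_0,I_2$ obey Schwarz reflection, $N$ is real-analytic; hence the image curve $h\mapsto N(h)$ is symmetric about the real axis, and I would apply the argument principle on $\mathcal{D}_R=\mathcal{D}\cap\{|h|\le R\}$, writing the number of zeros as $\tfrac{1}{2\pi}$ times the increment of $\arg N$ along $\partial\mathcal{D}_R$, split into the large arc $\{|h|=R\}$ and the two banks of the slit $[-R,0]$.

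First I would control the large arc from the exponents at infinity. Solving the Picard--Fuchs system (\ref{area}) asymptotically gives $I_0(h)\sim c_0 h^{3/4}$ and $I_2(h)\sim c_2 h^{5/4}$, so in the generic case $\alpha_2\neq 0$ one has $N(h)\sim \alpha_2 c_0\, h^{11/4}$ (equivalently $M_2\sim \tfrac{\alpha_2 c_0}{4}h^{7/4}$), and the increment of $\arg N$ along the nearly complete circle is close to $\tfrac{11\pi}{2}$. This is the point where the exterior case departs from the interior one: here $I_1\equiv 0$ by symmetry, so the dominant $h\cdot I_1\sim h^2$ term of the interior computation is absent and the growth is slower. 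The finitely many degenerate parameter configurations ($\alpha_2=0$, then $\beta_1=0$, \dots) only lower the exponent and are dispatched the same way with a smaller arc contribution.

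The essential step is the slit, where I would split $[-R,0]$ at the critical value $-1/4$ because the monodromy changes there. On $(-1/4,0)$ only $\delta_0$ jumps: by (\ref{gplus}) and (\ref{gminus1}), $\gamma^+-\gamma^-=2\delta_0^+$, so the relevant imaginary part is proportional to $(\alpha_0+4\alpha_1 h+\alpha_2 h^2)\oint_{\delta_0}y\,dx+(\beta_0+4\beta_1 h)\oint_{\delta_0}x^2y\,dx$. On $(-\infty,-1/4)$ all three vanishing cycles contribute: by (\ref{gplus}) and (\ref{gminus2}), $\gamma^+-\gamma^-=2\delta_0^++\delta_1^++\delta_{-1}^+$, and with the relations (\ref{d0}), (\ref{d1}) together with the symmetry $(x,y)\mapsto(-x,y)$, which gives $\oint_{\delta_{-1}}x^jy\,dx=\oint_{\delta_1}x^jy\,dx$ for the even $j=0,2$, the imaginary part is again a single combination $P(h)A(h)+Q(h)B(h)$ of two elliptic integrals over vanishing cycles. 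On each of the two subintervals I would bound the number of sign changes of this imaginary part by Lemmas $7$ and $8$ of Petrov \cite{Pet87}, exactly as in the interior argument.

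Assembling the large-arc increment with the slit bounds coming from the two subintervals then yields, after division by $2\pi$, a total of at most four, which is the claim; Theorem \ref{cyclicityext}(ii) follows. I expect the genuine difficulty to be the argument bookkeeping along the slit rather than any single estimate: one must insert small-circle indentations at $0$ and at $-1/4$ and show their contributions are negligible, verify that the two monodromy regimes glue continuously across $h=-1/4$ (equivalently, if one prefers to work with $M_2$ itself rather than $N$, control the simple pole of $M_2$ at $-1/4$), and confirm that Petrov's combination lemmas apply verbatim to the precise pairs of vanishing-cycle integrals that arise here. The reduction to $N$ and the growth estimate are routine; it is the uniform control of $\operatorname{Im} N$ across the critical value $-1/4$, and the verification that the separate contributions sum to exactly four, that carry the weight.
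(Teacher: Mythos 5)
Your overall framework (argument principle on ${\cal D}_R$, slit plus large arc, Schwarz symmetry, splitting the slit at $-1/4$) is the same as the paper's, and your Picard--Lefschetz formulas $\gamma^+-\gamma^-=2\delta_0^+$ on $(-1/4,0)$ and $\gamma^+-\gamma^-=2\delta_0^++\delta_1^++\delta_{-1}^+$ on $(-\infty,-1/4)$ are correct. But you diverge from the paper at exactly the step that carries the proof, and as written your argument does not close. The paper does not count zeros of $N(h)=(4h+1)M_2(h)$; it counts zeros of $F(h)=(4h+1)M_2(h)/I_0(h)$, which is legitimate because Lemma \ref{I0} (which you never use) guarantees $I_0\neq 0$ in ${\cal D}$. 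This division is not cosmetic. First, it lowers the growth at infinity: $F\sim h^2$ gives an arc contribution close to $4\pi$ (two turns), whereas your $N\sim h^{11/4}$ gives $11\pi/2$ (2.75 turns). Second, and decisively, it collapses the slit contribution: writing $F(h)=\mu(h)\frac{I_2(h)}{I_0(h)}+\lambda(h)$, the real polynomial $\lambda$ cancels in $F^+-F^-$, and
$$
2\sqrt{-1}\,\mathrm{Im}\,F(h)=\mu(h)\,\frac{W(h)}{|I_0(h)|^2},\qquad
W(h)=I_2^+I_0^--I_0^+I_2^-=c\,h(4h+1),
$$
where the identity $W(h)=c\,h(4h+1)$ is forced by the Picard--Fuchs system (\ref{area}). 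Hence $\mathrm{Im}\,F$ vanishes on the entire cut at most twice (at $-1/4$ and at the root of $\mu$), uniformly across both monodromy regimes, with no appeal at all to estimates on vanishing-cycle integrals.

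Your route instead requires bounding the zeros of $\lambda(h)A(h)+\mu(h)B(h)$, where $A,B$ are integrals over the monodromy cycle and $\deg\lambda=2$, $\deg\mu=1$. That is the gap: in the interior case (Lemma \ref{g3}) Petrov's Lemmas 7 and 8 are applied to a combination with a \emph{linear} and a \emph{constant} coefficient, and they do not extend "verbatim" to quadratic/linear coefficients --- bounding such a combination is essentially the original problem over again. Nor can you recover the paper's polynomial structure after the fact: $\mathrm{Im}\,N=\mathrm{Im}(FI_0)$ mixes $\mathrm{Im}\,F$ with $I_0$, so it is not a polynomial multiple of a positive function. Finally, your bookkeeping is asserted rather than performed: with $11\pi/2$ from the arc, staying at four zeros leaves room for only about one extra half-turn along the slit, i.e.\ you would need $\mathrm{Im}\,N$ to vanish at most once or twice on all of $(-\infty,0)$; your Petrov-lemma step does not deliver this, and with two subintervals each potentially contributing several sign changes the total exceeds four. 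Either prove a sharp slit bound for your $N$ (hard), or follow the paper: divide by $I_0$, justify it by Lemma \ref{I0}, and let the Wronskian do the work.
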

\begin{lem}
The Abelian integrals $I_0(h)$ and $I_0'(h)$ do not vanish in ${\cal D}$.
\label{I0}
\end{lem}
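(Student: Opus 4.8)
The plan is to prove the two non-vanishing statements by different means: $I_0'$ by a purely homological argument, and $I_0$ by the same argument-principle (Petrov) scheme already used for Lemmas \ref{z1} and \ref{z3}.

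For $I_0'$ I would first rewrite it as a period of a holomorphic form. Differentiating $I_0(h)=\oint_{\gamma(h)}y\,dx$ under the integral sign (Gelfand--Leray), and using $y=\sqrt{2(h-U(x))}$ with $U(x)=-\frac{x^2}{2}+\frac{x^4}{4}$, so that $\partial y/\partial h=1/y$, gives $I_0'(h)=\oint_{\gamma(h)}\frac{dx}{y}$. Now $\frac{dx}{y}$ is the holomorphic differential on the elliptic curve $\Gamma_h$, and its periods over $H_1(\Gamma_h,\Z)$ form a rank-two lattice in $\C$; in particular the period map is injective, so the period over a cycle vanishes only if the cycle is null-homologous. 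Since $\gamma(h)$ is the non-separating cycle around the branch cut joining the two real turning points (hence primitive and non-trivial) for every $h\in\mathcal{D}$, and degenerates only at the excluded critical values $h=0,-\frac14$, I conclude $I_0'(h)\neq 0$ on $\mathcal{D}$.

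For $I_0$ I would apply the argument principle on $\mathcal{D}_R=\mathcal{D}\cap\{|h|\le R\}$, as in the proof of Lemma \ref{z3}. The number of zeros equals $\frac{1}{2\pi}$ times the increment of $\arg I_0$ along $\partial\mathcal{D}_R$. A scaling estimate $x=(4h)^{1/4}u$ shows $I_0(h)\sim C\,h^{3/4}$ with $C>0$ as $h\to\infty$, so along the circle $|h|=R$ the argument increases by about $\frac34\cdot 2\pi=\frac{3\pi}{2}$, monotonically from $-\frac{3\pi}{4}$ to $\frac{3\pi}{4}$. Along the two sides of the cut I would use $\operatorname{Im}I_0^+(h)=\frac{1}{2i}\oint_{\gamma^+(h)-\gamma^-(h)}y\,dx$ together with the Picard--Lefschetz relations (\ref{gplus})--(\ref{gminus2}): on $(-\frac14,0)$ one has $\gamma^+-\gamma^-=2\delta_0^+$, and on $(-\infty,-\frac14)$ one has $\gamma^+-\gamma^-=2\delta_0^++\delta_1^++\delta_{-1}^+$. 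By Petrov's Lemmas $7$ and $8$ \cite{Pet87} these vanishing-cycle integrals keep a constant sign on the interior of the cut, so $I_0^+$ stays strictly in the upper half-plane there (and $I_0^-=\overline{I_0^+}$ in the lower half-plane). Consequently the image $I_0(\partial\mathcal{D}_R)$ never meets the negative real axis: each side of the cut contributes $-\frac{3\pi}{4}$ to the argument, the total increment is $\frac{3\pi}{2}-\frac{3\pi}{4}-\frac{3\pi}{4}=0$, and hence $I_0$ has no zeros in $\mathcal{D}_R$ for every $R$, i.e. $I_0\neq 0$ on $\mathcal{D}$.

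The routine parts are the Gelfand--Leray differentiation and the growth estimate. I expect the main obstacle to be the analysis along the cut: one must verify that $\operatorname{Im}I_0^+$ does not change sign on $(-\infty,0)$, so that no spurious winding is produced at the branch point $h=-\frac14$ where the homology class of $\gamma^+-\gamma^-$ jumps. This is exactly the monotonicity of the vanishing-cycle areas supplied by Petrov's lemmas, and it is the only point where the detailed geometry of the exterior annulus enters.
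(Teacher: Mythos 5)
Your treatment of $I_0'$ coincides with the paper's: $I_0'(h)=\oint_{\gamma(h)}\frac{dx}{y}$ is a period of the holomorphic form on the elliptic curve $\Gamma_h$, and the class of $\gamma(h)$ remains nontrivial on the compactified curve throughout $\mathcal{D}$, so the period cannot vanish. For $I_0$ you take a genuinely different route: you run the argument principle on $I_0$ itself, whereas the paper runs it on the quotient $F(h)=I_0(h)/I_0'(h)$. Your bookkeeping is sound as far as it goes: $I_0\sim Ch^{3/4}$ gives an increment close to $\tfrac{3\pi}{2}$ along the circle, $I_0^-=\overline{I_0^+}$ on the cut by Schwarz reflection, and your Picard--Lefschetz identities $\gamma^+-\gamma^-=2\delta_0^+$ on $(-\tfrac14,0)$ and $\gamma^+-\gamma^-=2\delta_0^++\delta_1^++\delta_{-1}^+$ on $(-\infty,-\tfrac14)$ are exactly \eqref{gplus}--\eqref{gminus2}. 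Your choice of function even has a small technical advantage: $I_0$ tends to the (nonzero) area enclosed by the eight-loop as $h\to 0$, so the contour value stays away from the origin at the end of the cut, while the paper's $F$ tends to $0$ there because $I_0'\sim -\ln h$.

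The gap is the sign-constancy claim along the cut, which is the heart of the proof and is not covered by the citation you lean on. You need $\operatorname{Im}I_0^+(h)\neq 0$ on all of $(-\infty,0)$; on $(-\infty,-\tfrac14)$ this is the nonvanishing of $\oint_{2\delta_0^++\delta_1^++\delta_{-1}^+}y\,dx$, an integral over a combination of all three vanishing cycles, in the range of $h$ where the real level curve is empty. Petrov's Lemmas 7 and 8, in the way they are used in this paper (Lemmas \ref{g1}, \ref{g3}), bound the zeros of combinations of $\oint_{\delta_0}y\,dx$ and $\oint_{\delta_0}x^2y\,dx$ over the single saddle vanishing cycle along the other cut $(0,+\infty)$; they say nothing about the cycle $2\delta_0^++\delta_1^++\delta_{-1}^+$ on $(-\infty,-\tfrac14)$. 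Nor can you borrow the paper's cut computation, since it establishes $\operatorname{Im}\bigl(I_0/I_0'\bigr)\neq 0$, which does not imply $\operatorname{Im}I_0\neq 0$. The paper sidesteps precisely this difficulty with the Wronskian trick: $\operatorname{Im}F=W(h)/(2|I_0'|^2)$, where $W$ is the determinant of periods, which by the Picard--Fuchs system \eqref{area} (Legendre's relation) is a nonzero constant on $(-\infty,-\tfrac14)$ and twice that constant on $(-\tfrac14,0)$, so no sign analysis of individual vanishing-cycle integrals is needed. Your version can be repaired without Petrov: $\tfrac{d}{dh}\operatorname{Im}I_0^+=\tfrac{1}{2i}\oint_{\gamma^+-\gamma^-}\tfrac{dx}{y}$ is a period of the holomorphic form over a class which is nonzero on the compactified curve, hence it never vanishes, so $\operatorname{Im}I_0^+$ is strictly monotone on each of $(-\tfrac14,0)$ and $(-\infty,-\tfrac14)$; since $\operatorname{Im}I_0^+\to 0$ as $h\to 0^-$, is continuous at $h=-\tfrac14$ (the integrals over $\delta_{\pm1}$ vanish there), and grows like $CR^{3/4}\sin\tfrac{3\pi}{4}$ as $h\to-\infty$, it keeps one sign on $(-\infty,0)$. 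With that step supplied, your argument is complete.
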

\begin{proof}[Proof of Lemma \ref{I0}]
$I_0'(h)$ is a period of the holomorphic one-form $\frac{dx}{y}$ on the elliptic curve $\Gamma_h$, and therefore does not vanish. For real values of $h$ $ I_0'(h)$ represents the period of the orbit $\gamma(h)$ of $d H=0$, while
$I_0(h)$ equals the area of the interior of $\gamma(h)$.
It is remarkable, that $I_0(h)$ does not vanish in a complex domain too.
Indeed, consider  the analytic function
 $$F(h)= \frac{I_0(h)}{I'_0(h)} , h \in \cal D .$$
 We shall count its zeros in ${\cal D}$ by making use of the argument principle as the  proof of previous lemma (see subsection \ref{zerosint}).
\begin{quote}
{\it Let $D\subset\C$ be a relatively compact domain, with a piece-wise smooth boundary.
We suppose, that $f:D\rightarrow\C$ is a continuous function, which is complex-analytic in $D$, except at a finite number of points on the border $\partial D$. We suppose also that $f$ does not vanish on $\partial D$.
Denote by $Z_{D}(f)$ the number of zeros of $f$ in $D$, counted with  multiplicity.
The increment of the argument
\noindent $Var_{\partial D}(argf)$ of $f$ along $\partial D$ oriented counter-clockwise is well defined
and equals the winding number of the curve $f(\partial D)\subset \C$ about the origin, divided by $2\pi$. The argument principle states then that }
\begin{eqnarray}\label{10}
2\pi Z_{D}(f)&=&Var_{\partial D}(argf)
\end{eqnarray}
\end{quote}
Apply now the formula(\ref{10}) to the function $F$ in the intersection of a big disc with a radius $R$ and the complex domain ${\cal D}$. Along the  circle of radius $R$, for $R$ sufficiently big, the decrease of the argument of $F$ is close to $2\pi$, while along the branch cut $(-\infty, 0)$ we have
$$
2 \sqrt{-1} Im(F(h) = F^+(h)-F^-(h)=
\frac{I_{0}(h)}{I'_{0}(h)}-\frac{\overline{I_{0}(h)}}{\overline{I'_{0}(h)}}$$
$$
=\frac{\oint_{\gamma^+}ydx}{\oint_{\gamma^+}\frac{dx}{y}}-\frac{\oint_{\gamma^{-}}ydx}{\oint_{\gamma^{-}}\frac{dx}{y}}=
\frac{W(h)}{|\oint_{\gamma^+}\frac{dx}{y}|^2} .
$$
where
$$
W(h)=det\left(\begin{array}{cc} \oint_{\gamma^+}ydx & \oint_{\gamma^+}\frac{dx}{y}\\ & \\ \oint_{\gamma^-}ydx & \oint_{\gamma^-}\frac{dx}{y} \end{array}\right) .
$$
According to subsection \ref{monodromyext}, the function has two different determinations along $(-\infty,-1/4)$ and $(-1/4,0)$, both of which have no monodromy, and hence are rational in $h$. In fact, (\ref{area}) implies that $W(h)$ is a non-zero constant. If $W(h)=c$ in $(-\infty,-1/4)$, then it equals $2c$ in $(-1/4,0)$. Therefore along the branch cut
the argument of $F^+$ or $F^-$ increases by at most $\pi$.
Summing up the above information, we conclude that $F$ has no zeros in ${\cal D}$.
\end{proof}
\begin{proof}[Proof of Lemma \ref{z1}]
We denote
$$
F(h) = \frac{M_1(h)}{I_0(h)} = M_{1}(h)= \alpha_{0}(h)+\alpha_{2}\frac{I_{2}(h)}{I_{0}(h)}, h\in \cal D
$$

We apply, as in the proof of Lemma \ref{I0}, the argument principle to $F$. Along a big circle the increase of the  argument of $F$ is close to $\pi$. Along the branch cut $(-\infty,0]$ we have
$$
2 \sqrt{-1} Im(F(h)) = F^+(h)-F^-(h)=  \alpha_2 \frac{W(h)}{|I_0(h)|^2}
$$
where
$$
W(h)=
\det\left(\begin{array}{cc} \oint_{\gamma^+}yx^2dx & \oint_{\gamma^+} ydx \\ & \\ \oint_{\gamma^-}yx^2dx & \oint_{\gamma^-}ydx \end{array}\right) = c h (4h+1), c= const.\neq 0 .
$$
Therefore the imaginary part of $F(h)$ along the branch cut $(-\infty,0)$ vanishes at most once, at $-1/4$
Summing up the above information, we get that $F$ has at most two zeros in the complex domain ${\cal D}$.
\end{proof}
\begin{proof}[Proof of Lemma \ref{z3}]
We denote
$$
F(h)=(4h+1)\frac{M_{2}(h)}{I_0(h)}, h\in \cal D,
$$

and apply, as in the proof of Lemma \ref{I0}, the argument principle to $F$.  By making use of (\ref{second}) we have
\begin{equation}
F(h)=\mu(h)\frac{I_{2}(h)}{I_{0}(h)}+\lambda(h)
\label{fh}
\end{equation}
where\\
\begin{equation}
 \lambda(h)=\alpha_{0}+4\alpha_{1}h+4\alpha_{2}h^{2}, \; \mu(h)=\beta_{0}+4\beta_{1}h .
\label{albe}
\end{equation}
Along a big circle the increase of the  argument of $F$ is close to $4\pi$. Along the branch cut $(-\infty,0]$ we have as before
$$
2 \sqrt{-1} Im(F(h)) = F^+(h)-F^-(h)=  \mu(h) \frac{W(h)}{|I_0(h)|^2}
$$
where
$$
W(h)=
\det\left(\begin{array}{cc} \oint_{\gamma^+}yx^2dx & \oint_{\gamma^+} ydx \\ & \\ \oint_{\gamma^-}yx^2dx & \oint_{\gamma^-}ydx \end{array}\right) = c h (4h+1), c= const.\neq 0 .
$$
Therefore the imaginary part of $F(h)$ along the branch cut $(-\infty,0)$ vanishes at most two, at $-1/4$ and at the root of $\mu(h)$.
Summing up the above information, we get that $F$ has at most four zeros in the complex domain ${\cal D}$.
\end{proof}

\section*{Acknowledgements}
We are grateful to the referee for the valuable remarks and corrections. The text was written while the first two authors were visiting the Institute of Mathematics of Toulouse. There are obliged for the hospitality.

\end{document}